\tikzset{
  LabelStyle/.style = { rectangle, rounded corners, draw,
                        minimum width = 2em, fill = yellow!50,
                        text = red, font = \bfseries },
  VertexStyle/.append style = { inner sep=5pt,
                                font = \normalsize\bfseries},
  EdgeStyle/.append style = {->, bend left} }
\newtheorem{theorem}{Theorem}[section]
\newtheorem{lemma}[theorem]{Lemma}
\theoremstyle{definition}
\newcommand{\R}{\mathcal{R}}
\newcommand{\B}{\mathcal{B}}
\theoremstyle{remark}
\numberwithin{equation}{section}
\begin{document}

\title{Dots-and-Polygons}


\author{Jessica Dickson}
\address{}
\curraddr{}
\email{jmdickson@wsu.edu}
\thanks{}

\author{Rachel Perrier}
\address{}
\curraddr{}
\email{rachel.perrier@wsu.edu}
\thanks{}


\date{}

\dedicatory{}

\begin{abstract}
Dots-and-Boxes is a popular children's game whose winning strategies have been studied by Berlekamp, Conway, Guy, and others. In this article we consider two variations, Dots-and-Triangles and Dots-and-Polygons, both of which utilize the same lattice game board structure as Dots-and-Boxes. The nature of these variations along with this lattice structure lends itself to applying Pick's theorem to calculate claimed area. Several strategies similar to those studied in Dots-and-Boxes are used to analyze these new variations.
\end{abstract}

\maketitle

\section{Introduction}
Dots-and-Boxes is a children's game in which dots are set up in a rectangular grid.  Two players take turns drawing horizontal and vertical lines between these dots.  If a player completes a $1\times 1$ box they get to claim this area by placing their initials inside it or shading it with their color.  This player also gets to draw an extra line after claiming a box.  The game ends once the entire game board is claimed, and the winner is the player who has claimed the most boxes.

The game of Dots-and-Boxes has been studied extensively by Elwyn Berlekamp, John Conway, and Richard Guy \cite{Berlekamp,Win}.  In their books on the topic, they reveal strategies players can employ to give them a better chance to win.  One main strategy is the double-dealing move.  In this move, rather than claiming two boxes, the first player makes a move that allows the second player to claim both in a single move.  While the first player may lose these two boxes, this double-dealing move also forces the second player to move first in an unclaimed region of the game board, often with the purpose to open up a longer chain of boxes for the first player to claim.  The move where the second player claims the two boxes in a single move is called a doublecrossed move.  Figure \ref{IntroDC} demonstrates these types of moves.
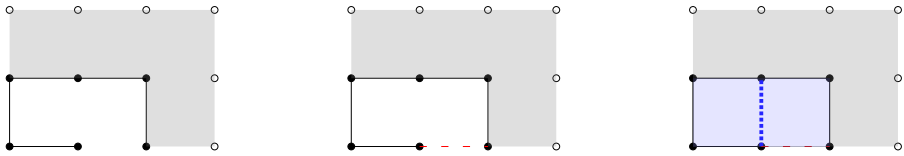
\begin{figure}[htb]
\centering
\resizebox{0.95\textwidth}{!}{%
\begin{tikzpicture}
    \foreach \Point in {(0,2),(1,2),(2,2),(3,2),(3,1),(3,0)}{
        \node[draw,circle,inner sep=1pt] at \Point {};
    }
    \foreach \x in {0,1,2}{
      \foreach \y in {0,1}{
        \node[draw,circle,inner sep=1pt,fill] at (\x,\y) {};
      }
    }
    \draw (1,0) -- (0,0) -- (0,1) -- (2,1) -- (2,0);
    \fill[gray,nearly transparent]  (2,0) -- (3,0) -- (3,2) -- (0,2) -- (0,1) -- (2,1) -- cycle;
    
    \foreach \Point in {(5,2),(6,2),(7,2),(8,2),(8,1),(8,0)}{
        \node[draw,circle,inner sep=1pt] at \Point {};
    }
    \foreach \x in {5,6,7}{
      \foreach \y in {0,1}{
        \node[draw,circle,inner sep=1pt,fill] at (\x,\y) {};
      }
    }
    \draw (6,0) -- (5,0) -- (5,1) -- (7,1) -- (7,0);
    \draw[thin, red, loosely dashed] (6,0) -- (7,0);
    \fill[gray,nearly transparent]  (7,0) -- (8,0) -- (8,2) -- (5,2) -- (5,1) -- (7,1) -- cycle;
    
    \foreach \Point in {(10,2),(11,2),(12,2),(13,2),(13,1),(13,0)}{
        \node[draw,circle,inner sep=1pt] at \Point {};
    }
    \foreach \x in {10,11,12}{
      \foreach \y in {0,1}{
        \node[draw,circle,inner sep=1pt,fill] at (\x,\y) {};
      }
    }
    \draw (11,0) -- (10,0) -- (10,1) -- (12,1) -- (12,0);
    \draw[ultra thick, blue, densely dotted] (11,0) -- (11,1);
    \draw[thin, red, loosely dashed] (11,0) -- (12,0);
    \fill[gray,nearly transparent]  (12,0) -- (13,0) -- (13,2) -- (10,2) -- (10,1) -- (12,1) -- cycle;
    \draw[fill=blue!40,nearly transparent] (10,0) rectangle (12,1);
\end{tikzpicture} 
}
\caption{The left picture shows the board on which the players are about to move.  The middle picture shows the first player's double-dealing move (in red dashes) while the right picture demonstrates the second player performing a doublecrossed move and claiming area (using blue dotted lines).}
\label{IntroDC}
\end{figure}

 Berlekamp, Conway, and Guy also give other strategies for controlling the form and final outcome of a game. These strategies rely on the number of dots on the game board, the number of long chains, and the number of doublecrossed moves.   They also mention a few variations of Dots-and-Boxes, such as playing on other board shapes and playing a variation called Strings-and-Coins on the dual of a Dots-and-Boxes game board.

In this paper we explore a new variation of Dots-and-Boxes, one where closing any polygonal-shaped region allows a player to claim it.  In this situation, a winner could be considered the player who captured the most area on the grid, similar to the player who claims the most boxes in Dots-and-Boxes. In the latter game, the unit nature of the boxes allows area to be easily calculated. The question then becomes, is there a simple way to calculate area in the former game?

Pick's Theorem allows easy calculation of the area of a polygon on an integer lattice, provided that all corner points are integer \cite{Gaskell}.  Let $B$ be the number of boundary points, that is, the lattice points that lie on the boundary of the polygon, and $I$ be the number of interior lattice points of the polygon.  Then, Pick's Theorem  tells us that the area of the closed region is given by
$$I+\frac{B}{2}-1.$$
For instance, consider Figure \ref{Pick}.  The polygon has $12$ boundary points and $2$ interior points, which makes the area of this polygon $2+\frac{12}{2}-1=7$.  This result can be verified by cutting the polygon up into geometrical shapes with known area formulas.

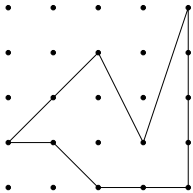
\begin{figure}[htb]
\centering
\resizebox{0.2\textwidth}{!}{%
\begin{tikzpicture}
    \foreach \x in {0,1,2,3,4}{
      \foreach \y in {0,1,2,3,4}{
        \node[draw,circle,inner sep=1pt,fill] at (\x,\y) {};
      }
    }
    \draw (0,1) -- (2,3) -- (3,1) -- (4,4) -- (4,0) -- (2,0) -- (1,1) -- cycle;

\end{tikzpicture} 
}%
\caption{Example of a closed region on the integer lattice whose area can be calculated using Pick's Theorem.}
\label{Pick}
\end{figure}

We will consider two variations of Dots-and-Boxes, both with the goal to claim the most area. In Dots-and-Triangles players claim area on the lattice game board by completing triangles with three boundary points and no interior points. Pick's theorem tells us that these claimed triangles have area $\frac{1}{2}$.  Since all these triangles have the same area, the player who claims the most will be the winner. Later, we consider the game Dots-and-Polygons, where players gain area by closing polygonal regions with no interior line segments.

\section{Dots-and-Triangles}

The game of Dots-and-Triangles is played on an finite integer lattice structure where players alternate turns. We will assume that these games are always played on finite rectangular lattice structures. The rules are as follows: (1) Each turn consists of connecting any two distinct points with a straight line that does not intersect another lattice point nor another line in the process; (2) if a player closes a triangular region with three boundary points and without any interior points, that player claims the associated area and immediately makes another move; (3) two players Robert and Betty play alternately, with the assumption Robert moves first; (4) the game ends once all the area on the game board is claimed.   For the remainder of this paper, we will assume that Robert (denoted by $\R$ with the use of red dashed lines) is always the first to move while Betty (denoted by $\B$ with blue dotted lines) is second. 

Unlike the version in \cite{Win} where the triangular cells are predetermined, the Dots-and-Triangles version we propose is more flexible.  Our proposal resembles a lattice structure version of the Monochromatic Complete Triangulation game discussed in \cite{Aichholzer} and the sankaku-tori game discussed in \cite{10.1007/978-3-319-07890-8_20}.  Our version plays out differently than these games, though, as neither allows collinear points.  Additionally, sankaku-tori lacks the free turn after the completion of a triangle. However, like the Monochromatic Complete Triangulation game or sankaku-tori, different plays on the same game board may result in different structures, as is demonstrated in Figure \ref{Moves}. Consequently, this rules out a helpful strategy used in analyzing Dots-and-Boxes---the dual graph, Strings-and-Coins. Since the dual structure of a Dots-and-Triangles game board is not evident until players have carved out the triangular cells, an analysis of this dual nature will not provide much insight until the game is nearly over.

\begin{figure}[htb]
\centering
\resizebox{0.5\textwidth}{!}{%
\begin{tikzpicture}
    \foreach \x in {0,1,2}{
      \foreach \y in {0,1,2}{
        \node[draw,circle,inner sep=1pt,fill] at (\x,\y) {};
      }
    }
    \draw (0,0) -- (0,2) -- (1,0) -- cycle;
    \draw (0,1) -- (1,0);
    \draw (1,0) -- (2,2) -- (2,0) -- cycle;
    \draw (1,0) -- (2,1);
    \draw (0,2) -- (1,1) -- (2,2) -- cycle;
    \draw (1,0) -- (1,2);
    
    \foreach \x in {5,6,7}{
      \foreach \y in {0,1,2}{
        \node[draw,circle,inner sep=1pt,fill] at (\x,\y) {};
      }
    }
    \draw (5,0) -- (5,2) -- (7,2) -- (7,0) -- cycle;
    \draw (5,0) -- (7,2);
    \draw (7,0) -- (5,2);
    \draw (6,0) -- (6,2);
    \draw (5,1) -- (7,1);
\end{tikzpicture} 
}%
\caption{Example of two possible final game board formations for a $3\times 3$ game of Dots-and-Triangles.}
\label{Moves}
\end{figure}
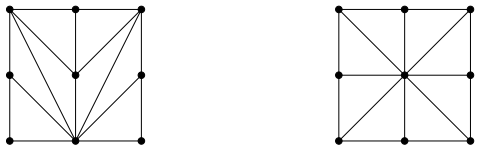

\subsection{Double-Dealings and Doublecrosses}

As was mentioned in the introduction, one of the main winning strategies employed in Dots-and-Boxes is forcing doublecrossed moves. These often occur at the end of a \textit{long chain}, a series of three or more boxes than can be completed in a single turn. Such a strategy allows a player to maintain control of the game. It is shown in \cite{Berlekamp,Win} that in the game of Dots-and-Boxes the number of turns equals the number of starting dots on the game board plus the number of doublecrosses. This result provides a clear strategy for the players. The first player wants to make (turns) = (dots) + (doublecrossed moves) odd; that is, (dots) + (long chains) even. Meanwhile the second player tries to make (turns) = (dots) + (doublecrossed) moves even; that is, (dots) + (long chains) odd.  It is interesting to note that, despite Dots-and-Triangles generating twice as many closed shapes as Dots-and-Boxes, the same strategies still hold.

\begin{theorem}
The number of turns equals the number of dots on the game board plus the number of doublecrosses.
\label{TrianglesDCThm}
\end{theorem}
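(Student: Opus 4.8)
The plan is to mimic the classical Dots-and-Boxes accounting but to replace the fixed grid of edges with a full triangulation of the board, using Euler's formula as the engine and Pick's Theorem only to make the count explicit. I would first observe that, since the game ends precisely when every bit of area has been claimed as an empty (area-$\frac{1}{2}$) triangle, the final diagram is a complete triangulation of the rectangular board in which \emph{every} lattice point is a vertex, because an empty triangle can contain no lattice point other than its three corners. Consequently the set of all segments ever drawn is exactly the edge set of this triangulation, and none is wasted. The crucial structural fact I would extract here is that although different games produce genuinely different triangulations (as in Figure~\ref{Moves}), the number of segments $E$ and the number of triangles $T$ are \emph{invariants} of the board: writing $V$ for the number of dots, Euler's formula $V-E+F=2$ together with $F=T+1$ gives $V=E-T+1$, and Pick's Theorem forces $T=2\cdot(\text{area})$. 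This invariance is the step I expect to be the main obstacle, since the whole point of the game is that the triangulation is not known in advance, so the argument must show that these counts are forced regardless of how play unfolds.

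Next I would set up the move bookkeeping. Classify each move by how many triangles it completes, letting $n_0$, $s$, and $c$ be the numbers of moves completing $0$, $1$, and $2$ triangles respectively; a newly drawn segment borders at most two faces and so completes at most two triangles, with $c$ counting exactly the doublecrossed moves. Since each triangle is completed by exactly one move, counting triangles gives $s+2c=T$, and counting moves gives $E=n_0+s+c$. Eliminating $s$ yields $n_0=E-T+c$.

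Then I would count turns. A player keeps the move exactly when a triangle is completed, so a turn ends only at a move that completes no triangle (the turn passes) or at the move that finishes the board (the game ends). I would argue that the final move of the game always completes at least one triangle: just before it, the single missing segment is either a boundary edge closing one triangle or an interior diagonal closing two, so the last turn is \emph{not} ended by a triangle-free move. Hence the turn-ending moves are precisely the $n_0$ triangle-free moves together with the one game-ending move, and the total number of turns is $t=n_0+1$.

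Finally I would combine the three ingredients: $t=n_0+1=(E-T+c)+1=(E-T+1)+c=V+c$, where the last equality is the Euler relation $V=E-T+1$ from the first step. Since $V$ is the number of dots and $c$ is the number of doublecrosses, this is exactly the claim. I would close by noting that one may substitute the explicit values $T=2mn$ and $E=3mn+m+n$ for an $m\times n$ board to recover $V=E-T+1=(m+1)(n+1)$, but that only Euler's formula is genuinely needed for the identity, which is what explains why the twofold increase in closed shapes leaves the Dots-and-Boxes strategy intact.
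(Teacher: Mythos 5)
Your proof is correct and takes essentially the same route as the paper's: Euler's formula applied to the final triangulation, combined with the bookkeeping that every drawn segment either completes a triangle (earning a free move, two triangles in the case of a doublecross) or ends a turn. Your explicit classification $E=n_0+s+c$ with $s+2c=T$ and $t=n_0+1$ is just a more carefully justified rendering of the paper's identity $L=P-C+T-1$, so the two arguments coincide in substance.
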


\begin{proof}
 This proof is a combination of those from \cite{Win,VardiTurns} using triangular cells rather than square ones and relies on a result from graph theory.
 
 First, suppose we are playing a game of Dots-and-Triangles without any doublecrosses. Let $D$ be the number of dots, $T$ the number of turns to draw $L$ line segments, and $P$ be the number of triangles we finish the game with. Now, every line segment that is placed, except the last, either forms exactly one triangle or ends the turn (but not both). However, the last line segment placed forms a triangle and marks the end of a turn and thus is counted in both $P$ and $T$. A $1$ is subtracted to rectify this double counting. This gives us the formula:
 \begin{align}
     L&=P+T-1.
 \end{align}
 But Euler's formula, which applies to finite, connected, planar graphs without edge intersections, can also be applied in a modified way to Dots-and-Triangles. That is, since we have $P+1$ faces in Euler's formula (rather than $P$ as we must count the outer face), we have
 \begin{align}
    D-L+(P+1) &= 2 \nonumber
 \end{align}
 or
 \begin{align}
    P + D - 1 &= L.
 \end{align}
So in a game without doublecrosses, Equations $2.1$ and $2.2$ give us that the number of turns equals the number of dots ($D=T$).

Now let's consider a game with doublecrosses, and let $C$ be the number of doublecrosses. Each doublecross will create two triangles rather than one so we can adjust for this double counting by subtracting $C$ from $P$. Altering Equation $2.1$ accordingly gives
\begin{align}
    L&=P-C+T-1
\end{align}
and, using Equation $2.2$, which still holds, combined with Equation $2.3$, we get
 $$T=D+C.$$
That is, the number of turns equals the number of dots plus the number of doublecrosses.
\end{proof}

 \subsection{Nested Diamonds}
For the purposes of this game, a \textit{diamond} is a quadrilateral whose boundaries have slopes of $1$ or $-1$. An \textit{$n$-nested diamond}, is a set of $n$ diamonds ($n\geq 1$) that lie within one another and have no other interior points between the layers of diamonds, with the exception of the innermost lattice point. Note that a $1$-nested diamond is just a single diamond with one interior point. Figure \ref{ND} presents an example of a 3-nested diamond.

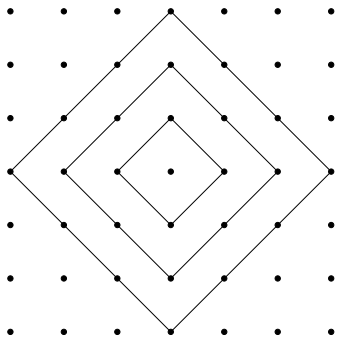
\begin{figure}[htb]
\centering
\resizebox{0.35\textwidth}{!}{%
\begin{tikzpicture}
    \foreach \x in {0,1,...,6}{
      \foreach \y in {0,1,...,6}{
        \node[draw,circle,inner sep=1pt,fill] at (\x,\y) {};
      }
    }
    \draw (0,3) -- (3,0) -- (6,3) -- (3,6) -- cycle;
    \draw (3,1) -- (1,3) -- (3,5) -- (5,3) -- cycle;
    \draw (3,2) -- (2,3) -- (3,4) -- (4,3) -- cycle;
\end{tikzpicture} 
}
\caption{Example of a 3-nested diamond.}
\label{ND}
\end{figure}

Now let's examine winning area on an $n$-nested diamond. If Robert is using the strategy of an ordinary child, as described in \cite{Berlekamp,Win}, he will move in the innermost nested diamond in order to give up the least area. If Betty were also playing as an ordinary child, she would then claim all of that area, and move in the next smallest nested diamond. In this manner, Robert wins if there is an even number of nested diamonds and Betty wins if there is an odd number.

However, if Betty utilizes a double-dealing strategy, she can always win the majority of the area of an $n$-nested diamond. Consider the subgame of playing in the 2-nested diamond shown in Figure \ref{NDPlay}. In this instance, Betty plays using a double-dealing strategy. She will not claim the inner area; instead, Betty will finish off the inner straight line begun by Robert.  Then, Robert would claim the inner region, and be the first to move in the next layer after which Betty would claim all the area of the second layer.  For a $3$-nested diamond, Betty could double-deal again in the second layer, as illustrated in Figure \ref{NDPlay2}, and then claim the entirety of the third layer. In fact, if Betty uses this strategy for any $n$-nested diamond where $n>1$, she will always claim more area than Robert.

\begin{figure}[htb]
\centering
\resizebox{0.95\textwidth}{!}{%
\begin{tikzpicture}
    \foreach \x in {0,1,...,4}{
      \foreach \y in {0,1,...,4}{
        \node[draw,circle,inner sep=1pt,fill] at (\x,\y) {};
      }
    }
    \draw (0,2) -- (2,0) -- (4,2) -- (2,4) -- cycle;
    \draw (2,1) -- (1,2) -- (2,3) -- (3,2) -- cycle;
    \draw[thin, red, loosely dashed] (1,2) -- (2,2);
    
    \foreach \x in {6,7,...,10}{
      \foreach \y in {0,1,...,4}{
        \node[draw,circle,inner sep=1pt,fill] at (\x,\y) {};
      }
    }
    \draw (6,2) -- (8,0) -- (10,2) -- (8,4) -- cycle;
    \draw (8,1) -- (7,2) -- (8,3) -- (9,2) -- cycle;
    \draw[thin, red, loosely dashed] (7,2) -- (8,2);
    \draw[ultra thick, blue, densely dotted] (8,2) -- (9,2);
    
    \foreach \x in {12,13,...,16}{
      \foreach \y in {0,1,...,4}{
        \node[draw,circle,inner sep=1pt,fill] at (\x,\y) {};
      }
    }
    \draw (12,2) -- (14,0) -- (16,2) -- (14,4) -- cycle;
    \draw (14,1) -- (13,2) -- (14,3) -- (15,2) -- cycle;
    \draw[thin, red, loosely dashed] (13,2) -- (14,2);
    \draw[ultra thick, blue, densely dotted] (14,2) -- (15,2);
    \draw[thin, red, loosely dashed] (14,1) -- (14,4);
    \draw[fill=red!40,nearly transparent]  (14,1) -- (13,2) -- (14,3) -- (15,2) -- cycle;
\end{tikzpicture} 
}
\caption{Example of $\B$'s double-dealing strategy.}
\label{NDPlay}
\end{figure}
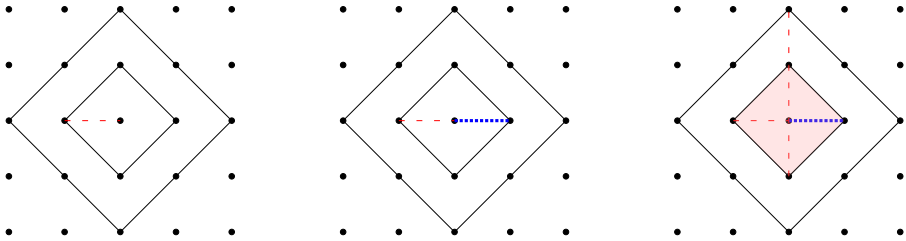

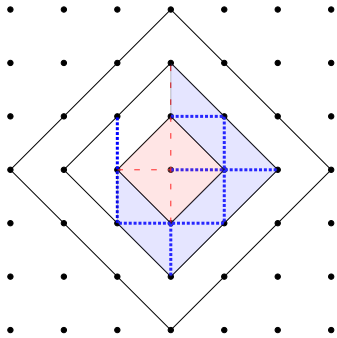
\begin{figure}[htb]
\centering
\resizebox{0.35\textwidth}{!}{%
\begin{tikzpicture}
    \foreach \x in {0,1,...,6}{
      \foreach \y in {0,1,...,6}{
        \node[draw,circle,inner sep=1pt,fill] at (\x,\y) {};
      }
    }
    \draw (0,3) -- (3,0) -- (6,3) -- (3,6) -- cycle;
    \draw (3,1) -- (1,3) -- (3,5) -- (5,3) -- cycle;
    \draw (3,2) -- (2,3) -- (3,4) -- (4,3) -- cycle;
    \draw[thin, red, loosely dashed] (2,3) -- (3,3);
    \draw[ultra thick, blue, densely dotted] (3,3) -- (4,3);
    \draw[thin, red, loosely dashed] (3,2) -- (3,5);
    \draw[fill=red!40,nearly transparent]  (3,2) -- (2,3) -- (3,4) -- (4,3) -- cycle;
    \draw[ultra thick, blue, densely dotted] (3,4) -- (4,4) -- (4,3) -- (5,3);
    \draw[ultra thick, blue, densely dotted] (4,3) -- (4,2) -- (3,2) -- (3,1);
    \draw[ultra thick, blue, densely dotted] (3,2) -- (2,2) -- (2,4);
    \draw[fill=blue!40,nearly transparent] (2,3) -- (2,2) -- (3,1) -- (5,3) -- (3,5) -- (3,4) -- (4,3) -- (3,2) -- cycle;
\end{tikzpicture} 
}
\caption{Example of $\B$ continuing her double-dealing strategy from Figure \ref{NDPlay} in the second layer of a 3-nested diamond.}
\label{NDPlay2}
\end{figure}

\begin{theorem}
In an $n$-nested diamond, the second player to move can always win, optimally claiming an area of $2n^2-2n+2$.
\end{theorem}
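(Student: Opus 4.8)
The plan is to combine a precise description of Betty's double-dealing strategy with an area count based on Pick's theorem. First I would record the relevant areas. The outermost diamond of an $n$-nested diamond has diagonals of length $2n$, so its total area is $\frac{1}{2}(2n)(2n) = 2n^2$; equivalently one applies Pick's theorem with $B = 4n$ boundary points and $I = 2n^2 - 2n + 1$ interior points. The region between the $(k-1)$-st and $k$-th diamond (counting outward, with the innermost diamond as $k=1$) then has area $2k^2 - 2(k-1)^2 = 4k-2$, and these annular layers are the arenas in which the game is decided, so I would organize the whole argument layer by layer.

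Next I would make Betty's strategy explicit. Robert, having no completed triangle to continue a turn, must be the first to place a segment in some layer, and such an opening move ends his turn, letting Betty respond inside that layer. The heart of the strategy is that in each of the first $n-1$ layers Betty performs a doublecross exactly as in Figures \ref{NDPlay} and \ref{NDPlay2}: she runs through the layer claiming triangles but, instead of taking the final portion, she plays the move that leaves Robert a region he must close in a single move. Pick's theorem (or direct inspection of the figures) confirms that this conceded region always has area exactly $2$; for the innermost layer, whose total area is already $2$, this single doublecross simply hands Robert the entire inner diamond. Whether Robert accepts the doublecross or declines it, he is forced to be the first to play in the next layer out: accepting nets him area $2$ and surrenders \emph{control} of the next layer to Betty, while declining only lets Betty claim that area $2$ herself and keep control as well. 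In the final ($n$-th) layer there is no further layer to open, so Betty claims all $4n-2$ of its area.

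From here the count is immediate and needs no layer-by-layer summation. Over the first $n-1$ layers Robert collects exactly $n-1$ doublecrosses of area $2$ each, for a total of $2(n-1)=2n-2$, while Betty collects everything else, namely $2n^2 - (2n-2) = 2n^2 - 2n + 2$. Since $\left(2n^2-2n+2\right) - (2n-2) = 2(n-1)^2 + 2 > 0$, Betty always claims strictly more area than Robert and wins. For the base case $n=1$ the diamond has area $2$, and Betty, rather than double-dealing, greedily closes triangles and takes all of it, consistent with $2(1)^2 - 2(1) + 2 = 2$.

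The two points requiring genuine care — and where I expect the main obstacle to lie — are (i) verifying that Betty can maintain control against \emph{every} Robert response, not merely the cooperative play drawn in the figures, so that Robert remains the opener of each successive layer while conceding only one doublecross per layer; and (ii) the matching optimality bound, namely that Robert can guarantee himself the $2n-2$ units so that Betty cannot be pushed above $2n^2-2n+2$. I would settle (i) with the standard Dots-and-Boxes control argument: taking all but the final doublecross of an opened region is never worse than any alternative, since conceding area can only transfer it to the controlling player, and an easy induction on the number of remaining layers then shows Betty never relinquishes control. For (ii) I would exhibit the symmetric strategy in which Robert always accepts the offered doublecrosses and argue that this secures him the full $2n-2$, pinning the optimal value at $2n^2 - 2n + 2$.
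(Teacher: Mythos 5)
Your proposal is correct and takes essentially the same approach as the paper's proof: Betty double-deals in each of the first $n-1$ layers so that Robert is forced to open every layer and collects only area $2$ per layer, while Betty claims the entire final layer, giving Robert $2(n-1)$ out of the total $2n^2$ and Betty $2n^2-2n+2$. If anything you are more careful than the paper, which simply asserts that the strategy forces Robert's hand and does not address your points (i) and (ii) about arbitrary deviations by Robert or the matching optimality bound.
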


\begin{proof}
Notice that the nested diamond structure can be broken into four congruent right triangles, each with two legs of length $n$.  So, the total area composed in this shape is $4 (\frac{1}{2}n^2)=2n^2$.

In a $1$-nested diamond, Robert moves first, leaving Betty to win by claiming all 4 triangles for a total area of $2=2(1^2)-2(1)+2$. For $n>1$, until the last layer of the $n$-nested diamond is played in, Betty, using the strategy described previously, will perform a double-dealing move, forcing Robert to always move first in any layer. This means Robert will only claim 4 triangles (or an area of 2) in each layer, excluding the last one where Betty claims all of the area. Thus Robert claims an area of $2(n-1)$ out of a total area of $2n^2$, giving Betty a total area of $2n^2-2(n-1)=2n^2-2n+2$. Since $2n^2-2n+2 > \frac{1}{2} (2n^2)=n^2$, Betty will win the majority of the area.
\end{proof}

Note that this theorem only decides who wins the majority of the area in the $n$-nested diamond. On a rectangular grid structure, if an $n$-nested diamond takes up the entire center of a $(2n+1)\times (2n+1)$ game board (which has total claimable area $4n^2$), then Betty has taken less than half the area possible.  This means that the play that happens in the four corners would determine the winner of the game. 

\subsection{Reduced and Extremely Reduced Shapes}
 
We use \textit{shape} as a general term to describe a closed polygon that may have interior line segments and has not been fully claimed. In a \textit{reduced} shape, any two boundary points that can be validly connected via the interior of the shape would divide it into two subshapes, each with at least one interior point.  That is, in a reduced shape no player can immediately claim area in a single move.   An \textit{extremely reduced} shape is a reduced shape that cannot be subdivided in a single move into two reduced shapes. Note that this means the boundary points of an extremely reduced shape cannot be validly connected in one move.
 
\begin{theorem}
\label{nointerior}
Any closed shape with no interior points is not reduced. Moreover, such a shape can be completely claimed by a player in a single turn.
\end{theorem}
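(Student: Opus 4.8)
The plan is to exploit the fact that a lattice polygon with no interior points can be cut entirely into primitive triangles, and that the pieces produced by any such cut again have no interior points. I would begin by recording the consequence of Pick's theorem: if $S$ is a closed shape with $B$ boundary points and no interior points, then its area is $B/2-1$, so $S$ decomposes into exactly $B-2$ triangles of area $\frac{1}{2}$. Since $S$ has no interior lattice points, any triangulation of $S$ using all $B$ boundary points as vertices consists of $B-2$ triangles, each necessarily primitive (three boundary points, no interior point), and hence immediately claimable. I would fix one such triangulation $\mathcal{T}$, chosen to contain any interior segments already present in $S$.

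For the first assertion I would observe that dividing $S$ along a chord joining two boundary points cannot create an interior lattice point in either piece, because $S$ has none to begin with and the chord passes through no lattice point. Hence whenever $B\geq 4$—so that $\mathcal{T}$ contains at least one diagonal—that diagonal is a valid connection splitting $S$ into two subshapes, each having no interior point and therefore claimable. This contradicts the defining property of a reduced shape, that every such division leaves each piece with at least one interior point, so $S$ is not reduced. The degenerate case $B=3$, in which $S$ is itself a single primitive triangle, is handled separately: such a triangle can be claimed outright, so again $S$ is not reduced.

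For the ``moreover'' statement I would argue by repeated ear-removal on $\mathcal{T}$. The dual graph of a triangulation of a simply connected polygon is a tree, and a leaf of this tree is an ear: a triangle of $\mathcal{T}$ whose two non-diagonal sides lie on $\partial S$ and which meets the rest of $\mathcal{T}$ along a single diagonal $d$. Since those two sides are already drawn, a player may draw $d$, completing a primitive triangle and claiming it; by the game rules this completion grants an immediate extra move. Deleting the ear leaves a shape $S'$ triangulated by the remaining primitive triangles, so $S'$ again has no interior points and its dual tree is the original with one leaf removed. Iterating, the player claims one triangle and earns the next move at each step, drawing the diagonals of $\mathcal{T}$ one at a time (the final diagonal closing the last two triangles simultaneously), so the whole of $S$ is claimed in a single turn. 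Any interior segments already present are chords between boundary points, hence compatible with $\mathcal{T}$, and only reduce the number of moves required.

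The step I expect to be the main obstacle is justifying the order of moves in the third paragraph—namely that the diagonals of $\mathcal{T}$ can always be inserted one at a time with each insertion completing a triangle, and that each such move is legal. This is exactly what the dual-tree-leaf (ear) structure provides, but it requires care to verify at every stage that the triangle being closed has its remaining two sides already in place, that the newly drawn diagonal crosses no lattice point (guaranteed by primitivity, since $S$ has no interior points), and that the leftover region $S'$ remains a single simply connected shape with no interior points so that the induction can proceed.
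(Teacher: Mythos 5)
Your proposal is correct and takes essentially the same approach as the paper: decompose the shape into primitive triangles of area $\frac{1}{2}$ and repeatedly claim ``ear'' triangles whose other two sides are already drawn, chaining the free moves so the whole shape is claimed in one turn. The only real difference is rigor---you justify the existence of an ear at each stage via the dual tree of the triangulation and prove non-reducedness directly by exhibiting a splitting diagonal, whereas the paper asserts the ear-claiming step informally (its ``Step 1'') and deduces non-reducedness as a corollary of claimability.
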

\begin{proof}
We propose an algorithm to show how all area in a closed shape with no interior points can be claimed by a player in a single turn. Per \cite{Gaskell}, any shape can be decomposed into triangles of area $\frac{1}{2}$. Consider any such decomposition. Figure \ref{PicNoInterior} illustrates a possible decomposition and demonstrates the following two steps on an example shape. 

\textit{Step 1:} Since the boundary of the shape would need to be included in this decomposition, begin by locating the triangles that are formed from two boundary line segments and one interior line segment. Since the boundary line segments have already been drawn, drawing these interior line segments will grant a free turn as each will complete a triangle of area $\frac{1}{2}$. If these moves claim all the area, we have our result. Otherwise, proceed to step 2.

\textit{Step 2:} We are now left with a subshape with no interior points and a decomposition originating from the starting decomposition. Perform step 1 again on this subshape. 

Since we are considering shapes on a finite board, there are only a finite number of moves possible.  Because of this, the proposed algorithm will terminate in a finite number of iterations and will result in all area claimed in a single turn.  This also implies that this closed shape was not reduced, as area was able to be claimed.
\end{proof}

\begin{figure}[htb]
\centering
\resizebox{0.95\textwidth}{!}{%
\begin{tikzpicture}
    \foreach \x in {0,1,...,3}{
      \foreach \y in {0,1,...,3}{
        \node[draw,circle,inner sep=1pt,fill] at (\x,\y) {};
      }
    }
    \draw[thick] (1,0) -- (1,1) -- (2,1) -- (0,3) -- (2,2) -- (2,3) -- (3,0) -- cycle;
    \draw[gray] (1,0) -- (2,1);
    \draw[gray] (2,0) -- (2,1);
    \draw[gray] (3,0) -- (2,1);
    \draw[gray] (3,0) -- (2,2);
    \draw[gray] (2,2) -- (2,1);
    \draw[gray] (1,2) -- (2,2);
    
    \foreach \x in {5,6,...,8}{
      \foreach \y in {0,1,...,3}{
        \node[draw,circle,inner sep=1pt,fill] at (\x,\y) {};
      }
    }
    \draw[thick] (6,0) -- (6,1) -- (7,1) -- (5,3) -- (7,2) -- (7,3) -- (8,0) -- cycle;
    \draw[magenta,thick] (6,0) -- (7,1);
    \draw[gray] (7,0) -- (7,1);
    \draw[gray] (8,0) -- (7,1);
    \draw[magenta,thick] (8,0) -- (7,2);
    \draw[gray] (7,2) -- (7,1);
    \draw[magenta,thick] (6,2) -- (7,2);
    \fill[magenta!40,nearly transparent] (6,0) -- (6,1) -- (7,1) -- cycle;
    \fill[magenta!40,nearly transparent] (5,3) -- (6,2) -- (7,2) -- cycle;
    \fill[magenta!40,nearly transparent] (8,0) -- (7,3) -- (7,2) -- cycle;
    
    \foreach \x in {10,11,...,13}{
      \foreach \y in {0,1,...,3}{
        \node[draw,circle,inner sep=1pt,fill] at (\x,\y) {};
      }
    }
    \draw[thick] (11,0) -- (12,1) -- (11,2) -- (12,2) -- (13,0) -- cycle;
    \draw[gray] (11,0) -- (11,1) -- (12,1);
    \draw[gray] (11,2) -- (10,3) -- (12,2) -- (12,3) -- (13,0);
    \draw[gray] (12,0) -- (12,1);
    \draw[gray] (13,0) -- (12,1);
    \draw[gray] (12,2) -- (12,1);
    \fill[gray!40,nearly transparent] (11,0) -- (11,1) -- (12,1) -- cycle;
    \fill[gray!40,nearly transparent] (10,3) -- (11,2) -- (12,2) -- cycle;
    \fill[gray!40,nearly transparent] (13,0) -- (12,3) -- (12,2) -- cycle;

\end{tikzpicture} 
}
\caption{Example of the algorithmic approach for Theorem \ref{nointerior}. The left picture shows a possible decomposition of an example starting shape into triangles of area $\frac{1}{2}$.  The middle picture shows the area claimed by a player in step 1.  The right picture shows the resulting smaller subshape and its decomposition to use for step 2.}
\label{PicNoInterior}
\end{figure}
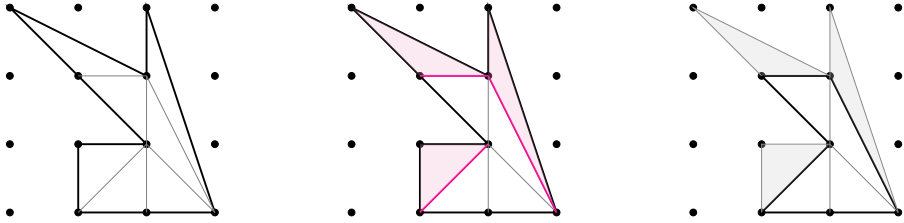

It is important to note that although the above theorem shows that all area of a closed shape with no interior points can be claimed in a single turn, it may not be in a player's best interest to do so.  A doublecross situation may prove to be more valuable, in a similar manner discussed in the nested diamond formation.
 
We can use Theorem \ref{nointerior} and the following Lemma \ref{GCDTheorem} to make a claim about a property of extremely reduced shapes. In Theorem \ref{ConvexERS}, we refer to \textit{linear points}, which are lattice points on the boundary of a polygon that are not corner points.

\begin{lemma}
If there are $k+1$ lattice points on a line segment, including the lattice endpoints $(a,b)$ and $(c,d)$, then $k=\gcd(c-a,d-b).$

\label{GCDTheorem}
\end{lemma}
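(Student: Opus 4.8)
The plan is to reduce everything to a single number-theoretic fact: the lattice points on the segment are exactly the equally spaced points obtained by dividing the displacement vector into $g = \gcd(c-a, d-b)$ equal integer steps. First I would set $\Delta x = c - a$ and $\Delta y = d - b$, and write $g = \gcd(\Delta x, \Delta y)$, so that $\Delta x = g p$ and $\Delta y = g q$ with $\gcd(p, q) = 1$. Every point of the segment can be written uniquely as $(a + t\,\Delta x,\, b + t\,\Delta y)$ for some $t \in [0,1]$, the two endpoints corresponding to $t = 0$ and $t = 1$. The goal is to show that such a point is a lattice point precisely when $s := tg$ is an integer in $\{0, 1, \dots, g\}$, which produces exactly $g + 1$ lattice points and hence $k = g$.

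Next I would carry out the characterization. Substituting $\Delta x = gp$ and $\Delta y = gq$, a point on the segment has coordinates $(a + s p,\, b + s q)$ where $s = tg$. If $s$ is an integer then both coordinates are obviously integers, so the point is a lattice point; conversely, I must show that integrality of the coordinates forces $s$ to be an integer. This is the heart of the argument and the one step that needs care. Since the point is a lattice point, $sp$ and $sq$ are both integers; because $\gcd(p,q) = 1$, Bézout's identity supplies integers $u, v$ with $up + vq = 1$, whence $s = s(up + vq) = u(sp) + v(sq)$ is an integer combination of integers and is therefore itself an integer.

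Having established that lattice points correspond bijectively to integers $s$ with $0 \le s \le g$ (the constraint $t \in [0,1]$ translating directly to $0 \le s \le g$), I would simply count: there are $g + 1$ such values of $s$, so the segment carries $g + 1$ lattice points including its endpoints. Comparing with the hypothesis that there are $k + 1$ lattice points gives $k = g = \gcd(c - a, d - b)$, as claimed. The main obstacle is the converse direction in the characterization step---ruling out a lattice point arising from a non-integer parameter---and this is exactly where the coprimality of $p$ and $q$, encoded through Bézout, is indispensable; the remaining steps are bookkeeping. I would also remark that the degenerate cases $\Delta x = 0$ or $\Delta y = 0$ are covered uniformly by the convention $\gcd(0, m) = |m|$, for which one of $p, q$ equals $\pm 1$ and the Bézout argument applies verbatim.
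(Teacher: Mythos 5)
Your proof is correct, and it reaches the result by a more self-contained route than the paper. The paper's proof writes the slope of the segment in lowest terms, $r/s$ with $\gcd(r,s)=1$, and cites a theorem of Apostol (Theorem 3.8 in its reference) to assert that the lattice points on the segment are exactly $(a+ns,\, b+nr)$ for $n=0,1,\dots,k$; the lemma then follows from the one-line computation $\gcd(c-a,d-b)=\gcd(ks,kr)=k$. You instead prove that spacing fact from scratch: you parametrize the segment as $(a+sp,\, b+sq)$ with $(p,q)$ the primitive displacement vector, and show via B\'ezout's identity applied to $\gcd(p,q)=1$ that integrality of both coordinates forces the parameter $s$ to be an integer, so the lattice points correspond exactly to $s\in\{0,1,\dots,g\}$. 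In effect you supply a proof of precisely the step the paper outsources to Apostol, and then you count the $g+1$ admissible parameters rather than computing a gcd of the labeled endpoints. What your version buys: it needs no external citation, it makes the crucial converse direction (no lattice points at non-integer parameters) explicit rather than implicit, and your remark that $\gcd(0,m)=|m|$ cleanly covers vertical and horizontal segments, which the paper's slope-based setup ($r/s$ is undefined when $s=0$) handles less gracefully. What the paper's version buys is brevity.
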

\begin{proof}
Consider a line segment from point $(a,b)$ to $(c,d)$ with $k+1$ lattice points, including endpoints, and with slope $\frac{r}{s}$ where $\gcd(r,s)=1$. Using Theorem 3.8 from Apostol in \cite{GCDCount}, we know there are no integer lattice points between $(a+(n-1)s, b+(n-1)r)$ and $(a+ns, b+nr)$ for $1\leq n \leq k$. Thus we can label each point in this manner, specifically $(c,d)$ as $(a+ks, b+kr)$, and get
\begin{align*}
    \gcd(c-a, d-b) &= \gcd(a+ks-a, b+kr-b)\\
        &= \gcd(ks,rk)\\
        &= k.
\end{align*}  
Hence, we have our result.
\end{proof}

\begin{theorem}
There are no convex extremely reduced shapes without interior line segments that have five boundary points.
\label{ConvexERS}
\end{theorem}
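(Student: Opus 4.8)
The plan is to argue by contradiction. Suppose $S$ is a convex shape with no interior line segments and exactly five boundary points that is extremely reduced. By the remark following the definition, no two boundary points of $S$ can be joined by a valid move, so every segment joining two boundary points and passing through the interior of $S$ must contain a lattice point in its relative interior; by Lemma \ref{GCDTheorem} this says each such chord has the gcd of its coordinate differences at least $2$. My goal is then to exhibit a single interior chord whose coordinate differences are coprime (a valid move), contradicting extreme-reducedness. Since a convex lattice polygon has at least three corners and $S$ has five boundary points, the split into corner points and linear points leaves exactly three cases: a pentagon (five corners), a quadrilateral with one linear point, and a triangle with two linear points.

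Before the case analysis I would record a useful reduction in the triangle case. Writing the three edge vectors as $\vec{AB},\vec{BC},\vec{CA}$ with $\vec{AB}+\vec{BC}+\vec{CA}=0$, Lemma \ref{GCDTheorem} gives the boundary count as the sum of the three edge gcds, so these gcds are either $\{1,2,2\}$ or $\{1,1,3\}$. The partition $\{1,2,2\}$ is impossible: if two of the edge vectors are divisible by $2$, then so is their sum, forcing the third edge vector to be divisible by $2$ and contradicting the entry $1$. Hence in the triangle case the two linear points must lie on one edge (the edge of gcd $3$), and the two remaining edges are primitive.

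In each case I would normalize by a unimodular transformation, which preserves the lattice, convexity, all gcds, and the notions of boundary, interior, and valid move, so as to place one edge along a coordinate axis; then I read off the interior chords and apply Lemma \ref{GCDTheorem}. For the pentagon all five edges are primitive and all five diagonals cross the interior, so every diagonal would have to be non-primitive; here I would exploit that two diagonals sharing a vertex cannot pass through a common interior lattice point (else three vertices are collinear, violating convexity), and combine this with Pick's theorem to force at least one diagonal to be primitive. For the quadrilateral with a single linear point $L$, the interior chords are the two diagonals together with the two segments from $L$ to the opposite vertices, and I would show that the divisibility conditions needed to make all four non-primitive are incompatible with the four corners being in convex position while the other three edges remain primitive.

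The hard part will be the triangle case. After normalizing to $A=(0,0)$, $B=(3,0)$ with linear points $(1,0),(2,0)$ and apex $C=(p,q)$, primitivity of the edges $CA,CB$ gives $\gcd(p,q)=\gcd(p-3,q)=1$, whereas making both interior chords $C(1,0)$ and $C(2,0)$ non-primitive demands $\gcd(p-1,q)>1$ and $\gcd(p-2,q)>1$. The crux is to show these four conditions cannot hold simultaneously, that is, that among the four consecutive integers $p-3,\,p-2,\,p-1,\,p$ one cannot have $q$ coprime to both outer values yet sharing a prime factor with each inner value. This is the one step I expect to require genuine care: it is a purely number-theoretic constraint on how the apex coordinates interact with the fixed base, and it is precisely the configuration in which a thin, tall triangle comes closest to defeating the statement. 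Controlling this step, rather than the more geometric pentagon and quadrilateral cases, is where the proof stands or falls.
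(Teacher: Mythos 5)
You set the problem up exactly as the paper does: contradiction, the three-way split into pentagon, quadrilateral-plus-linear-point, and triangle-plus-two-linear-points, and Lemma \ref{GCDTheorem} to translate extreme reducedness into gcd conditions (adjacent boundary points primitive, interior chords non-primitive). Your parity argument ruling out the edge-gcd partition $\{1,2,2\}$ is correct and matches the paper's dismissal of the subcase where the two linear points sit on different edges. But the step you flagged as the crux is not merely delicate --- it is false. Take $(p,q)=(7,15)$: then $\gcd(7,15)=1$, $\gcd(7-3,15)=\gcd(4,15)=1$, while $\gcd(7-1,15)=3>1$ and $\gcd(7-2,15)=5>1$, so all four of your conditions hold simultaneously. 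Concretely, the triangle with vertices $(0,0)$, $(3,0)$, $(7,15)$ is convex, has no interior line segments, and has exactly the five boundary points $(0,0),(1,0),(2,0),(3,0),(7,15)$; the chord from $(1,0)$ to $(7,15)$ passes through the interior lattice points $(3,5)$ and $(5,10)$, and the chord from $(2,0)$ to $(7,15)$ passes through $(3,3),(4,6),(5,9),(6,12)$, so no valid move connects any two of its boundary points. Your proof therefore cannot be completed along the route you describe.

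Moreover, this is not a gap you could patch by a cleverer argument: the triangle above satisfies every requirement in the statement of Theorem \ref{ConvexERS}, so it is a counterexample to the theorem itself, and indeed one gets an infinite family by taking $q=rs$ for distinct odd primes $r,s$ and choosing $p\equiv 1 \pmod{r}$, $p\equiv 2\pmod{s}$ by the Chinese Remainder Theorem. The paper is no better off here: it handles your hard case only with the sentence that ``a similar process to the previous two cases will show this subcase is impossible,'' and its worked case (four corners, one linear point) rests on a B\'ezout-coefficient matrix whose row reduction is asserted rather than justified --- B\'ezout coefficients are far from unique, so no generic inconsistency can be read off that system. In short, your instinct about where the proof ``stands or falls'' was exactly right; it falls, and in falling it exposes an error in the paper rather than in your plan.
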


\begin{proof}
We establish this proof by contradiction. A convex extremely reduced shape with exactly five boundary points falls into three cases: three corner points and two linear points, four corner points and one linear point, or five corner points.  By Theorem \ref{nointerior}, we know that the shape in question has at least one interior point.
 
We will first consider the case with four corner points and one linear point. Assume that the shape is extremely reduced and thus that any line segment between any two non-adjacent boundary points would have to cross an interior lattice point. Since the shape is extremely reduced, we can apply Lemma \ref{GCDTheorem} to two boundary points $(a,b)$ and $(c,d)$. If these points are adjacent, then $\gcd(c-a,d-b)=1$ and if these points are non-adjacent, then  $\gcd(c-a,d-b)>1$.

Assume without loss of generality that $(0,0), (n,m),\text{ and } (2n,2m)$ are boundary points, where $\gcd(m,n) = 1$, and that the other two boundary points are $x=(x_1, x_2)$ and $y=(y_1, y_2)$.  (We can always use the rigid transformations of translation and rotation to move the shape to these positions.)  Note that $(n,m)$ is the linear point, and the rest are corner points.  Further, assume that there are line segments connecting $x$ to $(0,0)$, $y$ to $(2n,2m)$, and $x$ to $y$. Then, using Lemma \ref{GCDTheorem}, we have the requirements:
\begin{align*}
    \gcd(y_1 - 0, y_2 - 0) &= d_1 > 1, \\
    \gcd(y_1 - n, y_2 - m) &= d_2 > 1, \\
    \gcd(x_1 - n, x_2 - m) &= d_3 > 1, \\
    \gcd(x_1 - 2n, x_2 - 2m) &= d_4 > 1, \\
    \gcd(x_1 - 0, x_2 - 0) &= 1, \\
    \gcd(y_1 - 2n, y_2 - 2m) &= 1, \\
    \gcd(x_1 - y_1, x_2 - y_2) &= 1. 
\end{align*}
Note that $d_i$ for $i=1,2,3,4$ above are integers.  By B\'ezout's identity (see \textsection 1.2 in \cite{Bezout} for details), we can find integers $a_i, b_i, c_i, e_i, f_i, g_i, h_i$, for $i=1,2$, such that
\begin{align*}
    a_1 y_1 + a_2 y_2 &= d_1, \\
    b_1 (y_1 - n) + b_2 (y_2 - m) &= d_2, \\
    c_1 (x_1 - n) + c_2 (x_2 - m) &= d_3, \\
    e_1 (x_1 - 2n) + e_2 (x_2 - 2m) &= d_4, \\
    f_1 x_1 + f_2 x_2 &= 1, \\
    g_1 (y_1 - 2n) + g_2 (y_2 - 2m) &= 1,\\
    h_1 (x_1 - y_1) + h_2 (x_2 - y_2) &= 1.
\end{align*}
Rearranging these equations and putting the coefficients into an augmented matrix, we obtain
$$
  \left[ 
    \begin{matrix}
      0 & 0 & a_1 & a_2 & d_1 \\
      0 & 0 & b_1 & b_2 & b_1 n + b_2 m + d_2 \\
      c_1 & c_2 & 0 & 0 & c_1 n + c_2 m + d_3 \\
      e_1 & e_2 & 0 & 0 & 2e_1 n + 2e_2 m + d_4\\
      f_1 & f_2 & 0 & 0 & 1 \\
      0 & 0 & g_1 & g_2 & 2g_1 n + 2g_2 m+  1 \\
      h_1 & h_2 & -h_1 & -h_2 & 1
    \end{matrix}
    \right]
$$
which row reduces to
$$
  \left[ 
    \begin{matrix}
      1 & 0 & 0 & 0 & 0 \\
      0 & 1 & 0 & 0 & 0 \\
      0 & 0 & 1 & 0 & 0 \\
      0 & 0 & 0 & 1 & 0 \\
      0 & 0 & 0 & 0 & 1 \\
      0 & 0 & 0 & 0 & 0 \\
      0 & 0 & 0 & 0 & 0
    \end{matrix}
    \right].
$$
However, this tells us that the system has no solution, which contradicts our assumption that our shape is extremely reduced.  Thus, this case does not yield an extremely reduced shape.
 
The case with five corner points has more variables, as the points can be labeled as $(0,0), w=(w_1, w_2), x=(x_1, x_2), y=(y_1, y_2)$ and $z=(z_1, z_2)$, where the adjacency is given by the ordering. However, by using Lemma \ref{GCDTheorem}, it can be shown that the system of equations given by the Euclidean algorithm is again inconsistent. The logistics are similar to the first case and are omitted for brevity's sake.

The final case of three corner points and two collinear points splits into two subcases dependent on whether those collinear points are adjacent. If they are adjacent, $(0,0), (n,m), (2n, 2m), (3n, 3m)$ and $x=(x_1, x_2)$ are these boundary points, where the $\gcd(n,m)=1$ and $x$ is connected to both $(0,0)$ and $(3n, 3m)$. A similar process to the previous two cases will show this subcase is impossible.  If the collinear points are not adjacent, label them $(0,0), (n_1, n_2), (2n_1, 2n_2), (m_1, m_2), (2m_1, 2m_2)$ where $(n_1, n_2)$ and $(m_1, m_2)$ are the collinear points and $\gcd(n_1, n_2)=\gcd(m_1,m_2)=1$. But then ${\gcd(2m_1-2n_1, 2m_1-2n_2) = 1}$ which is false, telling us this subcase is impossible. 
 \end{proof}

This theorem says that a player can always find two boundary points to connect in a convex shape with five boundary points and no interior line segments. Generically speaking, a player is often able to claim triangles from the boundary of a convex shape until it is reduced.  Alternatively (or additionally), if the player cannot gain area, they can split the shape into two smaller shapes without ceding area to their opponent.  Figure \ref{ShaveOff} gives an example of claiming triangles to get a reduced shape and splitting the resulting reduced shape into two extremely reduced subshapes.  It is interesting to note that Theorem \ref{ConvexERS} does not extend to the six boundary point case, as can be observed in Figure \ref{6Convex} or in \cite{373673}.

\begin{figure}[htb]
\centering
\resizebox{1\textwidth}{!}{%
\begin{tikzpicture}
    \foreach \x in {0,1,...,4}{
      \foreach \y in {0,1,...,4}{
        \node[draw,circle,inner sep=1pt,fill] at (\x,\y) {};
      }
    }
    \draw (0,1) -- (0,2) -- (2,4) -- (4,3) -- (3,0) -- (1,0) -- cycle;
    \foreach \x in {6,7,...,10}{
      \foreach \y in {0,1,...,4}{
        \node[draw,circle,inner sep=1pt,fill] at (\x,\y) {};
      }
    }
    \draw (6,1) -- (6,2) -- (8,4) -- (10,3) -- (9,0) -- (7,0) -- cycle;
    \draw[magenta, ultra thick] (8,0) -- (6,1) -- (9,0);
    \fill[magenta!40, nearly transparent] (7,0) -- (6,1) -- (9,0) -- cycle;
    \draw[magenta, ultra thick] (7,3) -- (6,1) -- (8,4);
    \fill[magenta!40, nearly transparent] (6,1) -- (8,4) -- (6,2) -- cycle;
    \foreach \x in {12,13,...,16}{
      \foreach \y in {0,1,...,4}{
        \node[draw,circle,inner sep=1pt,fill] at (\x,\y) {};
      }
    }
    \draw (12,1) -- (14,4) -- (16,3) -- (15,0) -- cycle;
    \draw[magenta, ultra thick] (14,4) -- (15,0);
    \draw[gray] (12,1) -- (12,2) -- (14,4) -- (16,3) -- (15,0) -- (13,0) -- cycle;
    \draw[gray] (13,3) -- (12,1) -- (14,0);
    \fill[gray!40, nearly transparent] (12,1) -- (12,2) -- (14,4) -- cycle;
    \fill[gray!40, nearly transparent] (12,1) -- (15,0) -- (13,0) -- cycle;

\end{tikzpicture} 
}%
\caption{The starting convex shape with no interior line segments is shown on the left. In the center, triangles have been claimed until the shape is reduced. On the right, a final move is made to produce two extremely reduced shapes.}
\label{ShaveOff}
\end{figure}
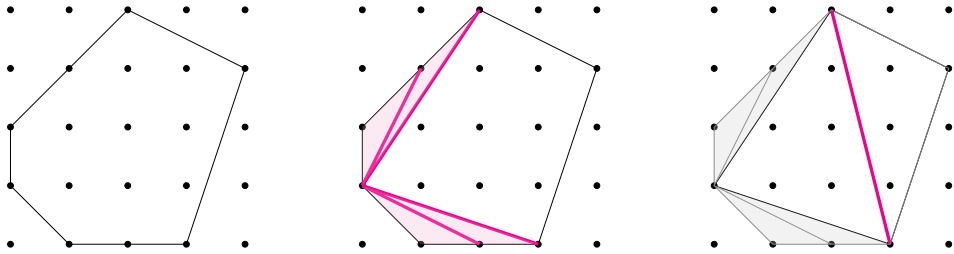

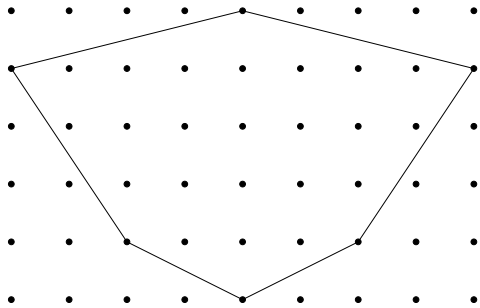
\begin{figure}[htb]
\centering
\resizebox{0.5\textwidth}{!}{%
\begin{tikzpicture}
    \foreach \x in {0,1,...,8}{
      \foreach \y in {0,1,...,5}{
        \node[draw,circle,inner sep=1pt,fill] at (\x,\y) {};
      }
    }
    \draw (4,0) -- (2,1) -- (0,4) -- (4,5) -- (8,4) -- (6,1) -- cycle;

\end{tikzpicture} 
}%
\caption{An extremely reduced convex shape with six boundary points.}
\label{6Convex}
\end{figure}

In the previous few theorems we exclusively considered shapes with no interior line segments. Let us now switch our focus to what we shall define as an \textit{eye},  a closed polygon in which all interior points are path connected, but none of the interior points are connected to boundary points. We use the term \textit{iris} to describe the path connected interior points and line segments. For instance, in Figure \ref{E} we see two examples of eyes.

\begin{figure}[htb]
\centering
\resizebox{0.65\textwidth}{!}{%
\begin{tikzpicture}
    \foreach \x in {-1,0,...,3}{
      \foreach \y in {0,1,...,4}{
        \node[draw,circle,inner sep=1pt,fill] at (\x,\y) {};
      }
    }
    \draw (0,1) -- (1,0) -- (2,1) -- (2,3) -- (1,4) -- (0,3) -- cycle;
    \draw (1,1) -- (1,3);
    
    \foreach \x in {5,6,...,9}{
      \foreach \y in {0,1,...,4}{
        \node[draw,circle,inner sep=1pt,fill] at (\x,\y) {};
      }
    }
    \draw (6,0) -- (9,2) -- (7,3) -- (8,4) -- (5,2) -- cycle;
    \draw (6,1) -- (7,1) -- (8,2) -- cycle;
    \fill[gray!40, nearly transparent] (6,1) -- (7,1) -- (8,2) -- cycle;
    \draw (8,2) -- (6,2);
\end{tikzpicture} 
}%
\caption{Examples of eyes.}
\label{E}
\end{figure}
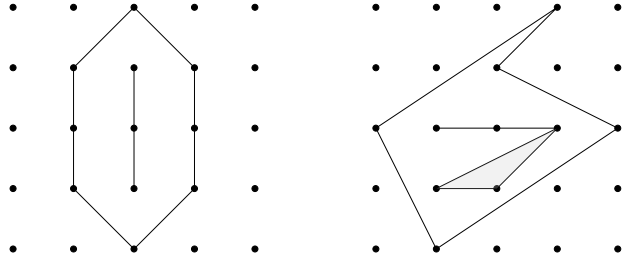

Eyes split the game board into subgames on potentially non-rectangular game boards.  Eyes cannot be claimed in a single move, so they are a structure that could easily pop up during game play.  However, sometimes small portions of eyes can still be claimed by a player before the entire eye is claimed. This can occur in two ways: using the vertices only from the iris or only from the boundary. In the former case, once no additional line segments can be added to connect the vertices of an iris, an iris is considered \textit{expanded}. If the iris is expanded and no additional line segments can be added to connect the boundary vertices we are left with a \textit{reduced eye}.  Note that this definition of a reduced eye, with an extra requirement of an expanded iris, preserves the intuitive idea that there is no immediately claimable area in a reduced shape. An \textit{extremely reduced eye} has the characteristics of a reduced eye but also the requirement that the eye cannot be subdivided in a single move.  Figure \ref{REye} shows the reduced eyes from Figure \ref{E}.  Close observation of these particular eyes shows they are not only reduced eyes but extremely reduced eyes, a property which turns out to be universal.

\begin{figure}[htb]
\centering
\resizebox{0.65\textwidth}{!}{%
\begin{tikzpicture}
    \foreach \x in {-1,0,...,3}{
      \foreach \y in {0,1,...,4}{
        \node[draw,circle,inner sep=1pt,fill] at (\x,\y) {};
      }
    }
    \draw (0,1) -- (1,0) -- (2,1) -- (2,3) -- (1,4) -- (0,3) -- cycle;
    \draw (1,1) -- (1,3);
    \draw[magenta, ultra thick] (0,2) -- (1,0) -- (0,3);
    \draw[magenta, ultra thick] (2,2) -- (1,0) -- (2,3);
    \fill[magenta!40, nearly transparent] (0,1) -- (1,0) -- (0,3) -- cycle;
    \fill[magenta!40, nearly transparent] (2,1) -- (1,0) -- (2,3) -- cycle;
    
    \foreach \x in {5,6,...,9}{
      \foreach \y in {0,1,...,4}{
        \node[draw,circle,inner sep=1pt,fill] at (\x,\y) {};
      }
    }
    \draw (6,0) -- (9,2) -- (7,3) -- (8,4) -- (5,2) -- cycle;
    \draw (6,1) -- (7,1) -- (8,2) -- cycle;
    \draw (8,2) -- (6,2);
    \fill[gray!40, nearly transparent] (6,1) -- (7,1) -- (8,2) -- cycle;
    \draw[magenta, ultra thick] (5,2) -- (7,3);
    \draw[magenta, ultra thick] (6,2) -- (6,1) -- (7,2);
    \fill[magenta!40, nearly transparent] (5,2) -- (7,3) -- (8,4) -- cycle;
    \fill[magenta!40, nearly transparent] (6,2) -- (6,1) -- (8,2) -- cycle;
\end{tikzpicture} 
}%
\caption{Claimed area that would result in reduced eyes.}
\label{REye}
\end{figure}
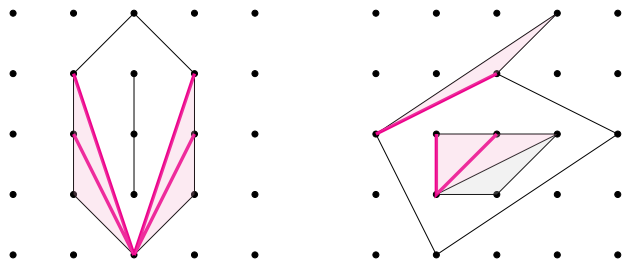

\begin{theorem}
All reduced eyes are extremely reduced eyes.
\label{ReducedExtremelyEyes}
\end{theorem}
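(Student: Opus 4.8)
The plan is to reduce the whole statement to one clean graph-theoretic fact, the same consequence of Euler's formula already exploited in the proof of Theorem~\ref{TrianglesDCThm}: adding a single edge to a finite planar graph creates a new bounded face precisely when the edge joins two vertices that already lie in the \emph{same} connected component. I would model a reduced eye as a planar graph $G$ whose vertices are the drawn lattice points and whose edges are the drawn segments. By the very definition of an eye, no interior point is joined to any boundary point, and by definition all interior points are path connected; hence $G$ decomposes into exactly two connected components, the boundary polygon $B$ and the iris $I$.

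The next step is to classify every legal move available from the reduced-eye position. A move joins two lattice points by a segment meeting no existing point or segment, so it must be of one of three kinds: boundary--boundary, iris--iris, or boundary--iris. The first is impossible because, in a reduced eye, no further boundary segments can be drawn; the second is impossible because the iris is expanded, so no further iris segments can be drawn. Thus the only legal move is a boundary--iris segment.

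The key step is then immediate. A boundary--iris move joins a vertex of $B$ to a vertex of $I$, and $B$ and $I$ are distinct components of $G$; such an edge creates no cycle, merely merging the two components, so by the face-count form of Euler's formula it produces no new bounded region. Equivalently, a segment with one endpoint strictly interior to the eye leaves only a slit that one can travel around, so it cannot separate the eye into two subshapes. Since no legal move can subdivide the region, the reduced eye automatically satisfies the extra requirement in the definition of an extremely reduced eye, which is exactly the claim.

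I expect the main obstacle to be the bookkeeping in the move classification rather than the topology: I must verify directly from the definitions of \emph{expanded iris} and \emph{reduced eye} that the boundary--boundary and iris--iris moves are genuinely exhausted, and that the iris is a single connected component so that every boundary--iris edge is a true cross-component edge. Once that classification is nailed down, the connected-component (Euler) criterion closes the argument with no further computation.
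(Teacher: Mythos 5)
Your proof is correct and is essentially the paper's argument in contrapositive form: the paper supposes a subdividing move exists, notes it would have to connect two boundary points, and observes that any such chord is blocked by the iris and hence invalid in a reduced eye, while you directly classify the legal moves and show the only surviving type, boundary-to-iris, cannot subdivide. The Euler/connected-component observation you add (a cross-component edge creates no new bounded face) merely makes explicit a step the paper leaves implicit, namely that a subdividing move must join two boundary points, so the substance of the two arguments is the same.
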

\begin{proof}
Suppose there is a reduced eye that is not extremely reduced. Then there exists a valid move connecting two boundary points that divides the polygon into two shapes, both with interior points. But in a reduced eye, such a line must have passed through the iris, an invalid move. Thus by contradiction we've established the result.
\end{proof}
 
As a result of this theorem, we will often refer to extremely reduced eyes as just reduced eyes for simplicity. The next theorem shows us why players try to avoid being the first to move in a reduced eye. 

\begin{theorem}
The second player to move in a reduced eye gains its area.
\label{RedEyeArea}
\end{theorem}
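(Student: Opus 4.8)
The plan is to exploit the rigidity of a reduced eye: its structure forces the first player to open it with a move that gains nothing, after which the second player can sweep up the entire remaining region using Theorem~\ref{nointerior}. First I would identify what area is actually still in play. By the definition of an eye, every interior lattice point of the polygon belongs to the iris, and in a reduced eye the iris is expanded, i.e.\ maximally connected, so it is a full triangulation of the convex hull of the iris points. Hence the only unclaimed region is the annulus lying between the iris and the outer boundary, and I would argue this annulus contains no lattice points in its interior: any such point would be an interior point of the eye and would therefore have to lie in the iris (and so inside its hull), a contradiction.

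Next I would show that the first player to move is forced into a move that completes no triangle. In a reduced eye no two boundary vertices can be validly connected (the eye is reduced) and no two iris vertices can be validly connected (the iris is expanded), so every legal move must join a boundary vertex $b$ to an iris vertex $p$. Such a segment cannot complete a triangle: a triangle on vertices $b,p,v$ would require both segments $bv$ and $pv$ to already be present, but whichever of the boundary or the iris the third vertex $v$ lies in, one of $bv,pv$ would be a boundary-to-iris segment, and by the definition of an eye no boundary-to-iris segment existed before this move. Thus the first player's forced move claims nothing and passes the turn, while topologically it cuts the annulus open, connecting its two boundary components.

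Then I would hand the resulting position to the second player and invoke Theorem~\ref{nointerior}. After the slit $bp$ is drawn, the annulus becomes a single simply connected region every one of whose lattice points lies on its boundary (the outer boundary, the iris boundary, or $b,p$ themselves); that is, a closed shape with no interior points. By Theorem~\ref{nointerior} the second player can claim all of it in a single uninterrupted turn, each move completing an area-$\frac12$ triangle and so earning a free move. Since this region is precisely the unclaimed area of the reduced eye, the second player gains its area, which is the desired conclusion.

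The step I expect to be the main obstacle is the careful justification that the region opened by the single slit genuinely has no interior lattice points, so that it is eligible for Theorem~\ref{nointerior}; this rests on the two facts that all interior points lie in the iris and that an expanded iris triangulates its convex hull, jointly ruling out any stray lattice point hiding in the annulus. A secondary point worth stating cleanly is the ``no triangle completed'' argument above, since it is exactly what guarantees the turn passes to the second player rather than being retained by the opener.
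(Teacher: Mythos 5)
Your proposal is correct, and its first half coincides with the paper's proof: both arguments observe that in a reduced eye the only legal move joins an iris vertex to a boundary vertex, so the first player is forced to open the region. Where you diverge is in the sweep-up phase. The paper argues directly and constructively: the second player uses the opener's segment and the lattice structure to complete one triangle, then reuses an edge of that triangle to complete the next, and so on until the eye is exhausted. You instead reduce to Theorem~\ref{nointerior}, first checking that the slit annulus between the expanded iris and the eye's boundary has no interior lattice points (every interior point of the eye is an iris vertex, hence lies on or inside the iris), and then letting that theorem finish the job. Your route buys two things the paper leaves implicit: an explicit verification that the opener's move completes no triangle (so the turn really does pass --- the paper never says why the first player's move claims nothing), and a reuse of existing machinery rather than an ad hoc ``repeat the process'' step. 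The one caveat is that the slit region is not a simple polygon --- the opening segment $bp$ is traversed twice on its boundary --- so Theorem~\ref{nointerior} does not apply verbatim; you should remark that its triangulate-and-claim argument still works because the region is simply connected, hence any triangulation of it has a tree as dual graph and therefore always has an ``ear'' with two already-drawn edges. This is a minor patch, and your argument with that remark is, if anything, more rigorous than the paper's.
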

 
\begin{proof}
Since a reduced eye is a closed shape with an expanded iris that is disconnected from the boundary points, the only valid move for a player to make is from a point on the boundary of that expanded iris to the boundary of the eye. The second player can then use that line segment, and the lattice structure of the game board, to form a claimed triangle. The second player can use at least one of the edges of this triangle to repeat the process until the entire eye is claimed. 
\end{proof}
 
While the eye structure is fairly straightforward in Dots-and-Triangles, it provides a greater challenge in the next game we discuss.

\section{Dots-and-Polygons}

Dots-and-Polygons is a generalized form of Dots-and-Triangles created by changing the second rule: If a player closes a polygon with no interior lines, they now claim that area and get an extra move. Note that closing two areas in the same move still only grants a single extra move.  By a polygon, we are referring to a closed shape without holes whose boundary is formed by line segments during game play which does not use a vertex or line segment multiple times in the formation of its boundary. See Figure \ref{Polygon}.

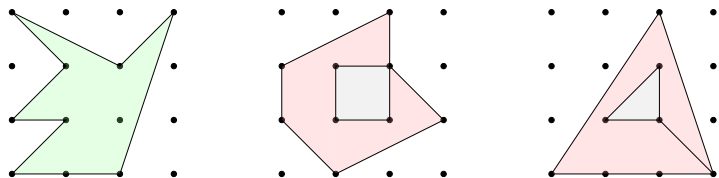
\begin{figure} [htb]
\centering
\resizebox{0.75\textwidth}{!}{%
\begin{tikzpicture}
    \foreach \x in {0,1,2,3}{
      \foreach \y in {0,1,2,3}{
        \node[draw,circle,inner sep=1pt,fill] at (\x,\y) {};
      }
    }
    \fill[green!40, nearly transparent] (0,0) -- (1,1) -- (0,1) -- (1,2) -- (0,3) -- (2,2) -- (3,3) -- (2,0) -- cycle;
    \draw (0,0) -- (1,1) -- (0,1) -- (1,2) -- (0,3) -- (2,2) -- (3,3) -- (2,0) -- cycle;

    \foreach \x in {5,6,7,8}{
      \foreach \y in {0,1,2,3}{
        \node[draw,circle,inner sep=1pt,fill] at (\x,\y) {};
      }
    }
    \fill[red!40, nearly transparent] (7,2) -- (7,3) -- (5,2) -- (5,1) -- (6,0) -- (6,2) -- cycle;
    \fill[red!40, nearly transparent] (6,0) -- (8,1) -- (7,2) -- (7,1) -- (6,1) -- cycle;
    \fill[gray!40, nearly transparent] (7,2) -- (6,2) -- (6,1) -- (7,1) -- cycle;
    \draw (7,2) -- (7,3) -- (5,2) -- (5,1) -- (6,0) -- (8,1) -- cycle;
    \draw (7,2) -- (6,2) -- (6,1) -- (7,1) -- cycle;
    
    \foreach \x in {10,11,12,13}{
      \foreach \y in {0,1,2,3}{
        \node[draw,circle,inner sep=1pt,fill] at (\x,\y) {};
      }
    }
    
    \fill[red!40, nearly transparent] (10,0) -- (13,0) -- (12,1) -- (11,1) -- cycle;
    \fill[red!40, nearly transparent] (10,0) -- (12,3) -- (13,0) -- (12,1) -- (12,2) -- (11,1) -- cycle;
    \fill[gray!40, nearly transparent] (12,1) -- (11,1) -- (12,2) -- cycle;
    \draw (10,0) -- (12,3) -- (13,0) -- cycle;
    \draw (13,0) -- (12,1) -- (11,1) -- (12,2) -- (12,1);

\end{tikzpicture} 
}%
\caption{The green area in the leftmost region fills a polygon, as defined, and thus is claimed area. The red areas in the other two figures are not claimed as neither fill a polygon.}
\label{Polygon}
\end{figure}

Using Pick's theorem, area can be easily calculated to find the winner of each game. Similar to Dots-and-Boxes and the previously discussed Dots-and-Triangles, a player performs double-dealing moves in order to gain control of the game. For example, consider the subgame in Figure \ref{DC}.  Robert can claim the entire triangular region's area of the left figure in his turn in several ways but is forced to move first in the next subgame. However, he can perform the double-dealing move in the right figure instead; Robert would gain no area but would force Betty to claim the area and move first in the next subgame.

\begin{figure} [htb]
\centering
\resizebox{0.75\textwidth}{!}{%
\begin{tikzpicture}
    \foreach \x in {0,1,2,3}{
      \foreach \y in {0,1,2}{
        \node[draw,circle,inner sep=1pt,fill] at (\x,\y) {};
      }
    }
    \foreach \x in {6,7,8,9}{
      \foreach \y in {0,1,2}{
        \node[draw,circle,inner sep=1pt,fill] at (\x,\y) {};
      }
    }
    \draw (0,0) -- (3,2) -- (0,1) -- (1,1);
    \draw (6,0) -- (9,2) -- (6,1) -- (7,1);
    \draw[red, thin, dashed] (6,1) -- (6,0);

\end{tikzpicture} 
}%
\caption{The figure on the left shows the original subgame.  The figure on the right shows $\R$'s double-dealing move.}
\label{DC}
\end{figure}
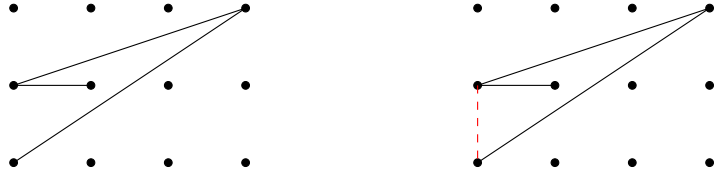

 The situation in Figure \ref{DC} can also occur in Dots-and-Triangles, but would not be an optimal double dealing move in that game (giving up $\frac{3}{2}$ area rather than 1 area).  Thus some doublecross situations in Dots-and-Triangles carry over to Dots-and-Polygons, but not all of them.  For instance, consider the previously discussed nested diamond double-dealing strategy from Figure \ref{NDPlay2}.  In this figure, a player was able to double-deal in the middle nested diamond by leaving several triangles unclaimed.  In Dots-and-Polygons, the player would have gained the area of these regions instead, since they are closed polygons with no line segments lying interior. From these two examples, we can observe that double-dealing moves are possible in Dots-and-Polygons, but it is a trickier feat to perform such moves, especially optimally.

\subsection{Claiming Area} Since Dots-and-Polygons allows larger swaths of area to be claimed, occasionally lattice points will never have any line segments connected to them. We adjust our definition of an eye slightly to account for this, with the modification that the interior points of the eye are either path connected or interior to a claimed region. See Figure \ref{HangingEyes}.

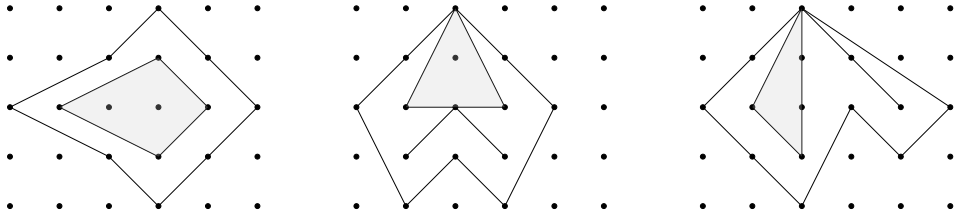
\begin{figure} [htb]
\centering
\resizebox{1\textwidth}{!}{%
\begin{tikzpicture}
    \foreach \x in {0,1,...,5}{
      \foreach \y in {0,1,...,4}{
        \node[draw,circle,inner sep=1pt,fill] at (\x,\y) {};
      }
    }
    \draw (3,1) -- (1,2) -- (3,3) -- (4,2) -- cycle;
    \fill[gray!40, nearly transparent] (3,1) -- (1,2) -- (3,3) -- (4,2) -- cycle;
    \draw (3,0) -- (2,1) -- (0,2) -- (2,3) -- (3,4) -- (5,2) -- cycle;

    \foreach \x in {7,8,...,12}{
      \foreach \y in {0,1,...,4}{
        \node[draw,circle,inner sep=1pt,fill] at (\x,\y) {};
      }
    }
    \draw (8,0) -- (7,2) -- (9,4) -- (8,2) -- (10,2) -- (9,4) -- (11,2) -- (10,0) -- (9,1) -- cycle;
    \draw (8,1) -- (9,2) -- (10,1);
    \fill[gray!40, nearly transparent] (9,4) -- (8,2) -- (10,2) -- cycle;
    
    \foreach \x in {14,15,...,19}{
      \foreach \y in {0,1,...,4}{
        \node[draw,circle,inner sep=1pt,fill] at (\x,\y) {};
      }
    }
    \draw (16,0) -- (14,2) -- (16,4) -- (15,2) -- (16,1) -- (16,4) -- (19,2) -- (18,1) -- (17,2) -- cycle;
    \draw (16,4) -- (18,2);
    \fill[gray!40, nearly transparent] (16,4) -- (15,2) -- (16,1) -- cycle;

\end{tikzpicture} 
}%
\caption{From left to right, the pictures show examples of an eye, a hanging eye, and a split hanging eye in Dots-and-Polygons.}
\label{HangingEyes}
\end{figure}

In Figure \ref{DC}, we can see that Robert performed a double-dealing move with the help of a ``hanging'' line segment. This motivates several of the following definitions. A \textit{hanging eye} has the same criteria as an eye with the relaxation that, instead of the interior vertices completely separated from the boundary vertices, the interior (the iris) is connected by one or more line segments to exactly one boundary vertex. Intuitively, a hanging eye resembles an eye whose iris has been connected to the boundary at a single vertex, as illustrated in Figure \ref{HangingEyes}. We also say that an eye is \textit{lazy} if the iris does not span the interior vertices of a closed region or include them interior to claimed area. If an eye has multiple irises spanning the interior vertices that are not path-connected to one another, we say it is \textit{split}. These terms can be used in conjunction with one another to describe shapes such as split hanging eyes as shown in Figure \ref{HangingEyes}.

While many of the theorems from Dots-and-Triangles will still hold in Dots-and-Polygons, the new nature of claiming area means each must be examined carefully. For instance, Theorem \ref{RedEyeArea} still holds for Dots-and-Polygons, but we have to carefully check that the move the first player makes does not form a valid polygon. 

\begin{theorem}
In Dots-and-Polygons, the second player to move in a reduced eye gains its area.
\end{theorem}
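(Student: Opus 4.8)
The plan is to follow the structure of the proof of Theorem~\ref{RedEyeArea} for Dots-and-Triangles, since the second player's claiming mechanism is essentially unchanged; the genuinely new work, flagged in the preceding remark, is verifying that the first player's forced move cannot itself close a polygon. First I would pin down the legal moves. In a reduced eye the iris is expanded, so no segment may join two iris vertices, and by the definition of a reduced eye (together with Theorem~\ref{ReducedExtremelyEyes}) no segment may join two boundary vertices without crossing the iris. Hence the only move available to the first player, $\R$, is a single segment $s$ joining a vertex on the boundary of the iris to a vertex on the boundary of the eye.

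The crux of the argument, and the step I expect to be the main obstacle, is showing that this forced opening move claims no area. Here I would view the partially drawn figure as a planar graph. Before $\R$ moves, the iris and the boundary of the eye are two \emph{disjoint} connected components: the iris is path connected by definition, the boundary is a single closed polygon, and by the definition of an eye no drawn edge yet joins them. Adding one edge between vertices lying in two distinct components creates no cycle, and a polygon is claimed only when a move closes a new bounded region, that is, creates a cycle. Since $\R$'s bridge $s$ creates no cycle at all, it closes no polygon, so $\R$ gains nothing and the turn passes to $\B$. This is exactly the check the remark warns is needed in Dots-and-Polygons but is automatic in Dots-and-Triangles.

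With the bridge $s$ in place the iris is connected to the boundary at one vertex, and I would finish as in Theorem~\ref{RedEyeArea}: $\B$ uses $s$ together with the lattice structure to close a triangle of area $\frac{1}{2}$, claiming it and earning a free move, and then repeatedly completes adjacent triangles of a fixed area-$\frac{1}{2}$ triangulation of the eye, each sharing an already-drawn edge with the growing claimed region, in the spirit of the algorithm of Theorem~\ref{nointerior}. Because each completion grants another move and the board is finite, $\B$ sweeps through the entire triangulation in this single turn, claiming all of the eye's area; thus the second player to move gains its area.

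Two points I would double-check while writing, to keep the ``no polygon on the first move'' argument airtight: first, that the iris and boundary really are the only drawn structures relevant to $\R$'s move, so that under the Dots-and-Polygons relaxation (permitting interior points to be interior to already-claimed area) no stray edges let $\R$'s bridge complete a cycle; and second, that along $\B$'s sweep there is always an adjacent triangle with two drawn edges available to complete. Neither should cause difficulty, since the defining disconnection of the iris from the boundary guarantees the bridge forms no cycle, and the frontier of $\B$'s growing claimed region always supplies the next completable triangle.
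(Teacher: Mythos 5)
Your opening and middle steps coincide with the paper's proof: the paper likewise observes that the only legal move for $\R$ in a reduced eye is a segment $s$ from the expanded iris to the eye's boundary, and then checks that this move claims nothing. Your justification of that check (before the move, the iris and the boundary are disjoint drawn components, so $s$ is a cut edge lying on no cycle and hence on no polygon boundary) is a clean, equivalent rephrasing of the paper's argument that the region closed by $s$ traverses $s$ twice and so is not a polygon in the game's sense.

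Where you diverge is the finishing step, and that is where there is a genuine gap. The paper finishes in one stroke, exploiting the defining feature of Dots-and-Polygons: $\B$ draws a \emph{second} iris-to-boundary segment, which cuts the remaining region into two pieces whose boundaries use each drawn segment once; both are then valid polygons with no interior lines, so she claims all remaining area in a single move. You instead port the Dots-and-Triangles sweep, which forces every one of $\B$'s moves to close an area-$\frac{1}{2}$ triangle (any move that closes nothing ends her turn). Neither of the two facts you defer at the end is automatic. For her first move you need an empty lattice triangle $uvw$ on $s=uv$ whose second side is \emph{already drawn}, i.e.\ $w$ must be a boundary-neighbor of $v$ or an iris-neighbor of $u$, the triangle must be empty, and the third side must be a legal move; nothing in your argument produces such a $w$. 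For the sweep, the region left after $\R$'s move is not a simple polygon---its boundary walk traverses $s$ twice, and tree-like portions of the iris twice---so the algorithm of Theorem \ref{nointerior} does not apply as stated, and ``the frontier always supplies the next completable triangle'' is precisely the assertion that must be proved; nor can Theorem \ref{RedEyeArea} simply be cited as a black box, since reduced eyes in Dots-and-Polygons (where interior points may sit inside claimed regions) are not positions that can arise in Dots-and-Triangles. These gaps are repairable---for instance, triangulate the disk created by $s$ into area-$\frac{1}{2}$ triangles and repeatedly claim ``ear'' triangles, which the dual tree guarantees exist and which have two sides on the (drawn) boundary walk---but that repair is real work, and it is exactly the work the paper's one-move finish avoids.
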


\begin{proof}
This proof follows similarly to Theorem \ref{RedEyeArea}; we need only check that the move the first player makes does not form a closed polygon. Such a move forms a line segment from a point on the boundary of that expanded iris to the boundary of the eye. This forms a closed region which uses the new line segment twice as a boundary. Thus this closed region is not a valid polygon, and the first player does not claim any area, ending their turn. This is illustrated in Figure \ref{InvalidArea}. The second player can claim the remaining area by simply making any move connecting a point on the boundary of the expanded iris to the boundary of the eye to form two valid closed polygons.
\end{proof}

\begin{figure}[htb]
\centering
\resizebox{0.65\textwidth}{!}{%
\begin{tikzpicture}
    \foreach \x in {0,1,...,4}{
      \foreach \y in {0,1,...,3}{
        \node[draw,circle,inner sep=1pt,fill] at (\x,\y) {};
      }
    }
    \draw (1,0) -- (4,2) -- (2,3) -- (0,2) -- cycle;
    \draw (1,1) -- (1,2) -- (3,2) -- (2,1) -- cycle;
    \fill[gray!40, nearly transparent] (1,1) -- (1,2) -- (3,2) -- (2,1) -- cycle;
    
    \foreach \x in {6,7,...,10}{
      \foreach \y in {0,1,...,3}{
        \node[draw,circle,inner sep=1pt,fill] at (\x,\y) {};
      }
    }
    \draw (7,0) -- (10,2) -- (8,3) -- (6,2) -- cycle;
    \draw (7,1) -- (7,2) -- (9,2) -- (8,1) -- cycle;
    \fill[gray!40, nearly transparent] (7,1) -- (7,2) -- (9,2) -- (8,1) -- cycle;
    \draw[magenta, ultra thick] (8,2) -- (8,3);
    \fill[red!40, nearly transparent] (8,3) -- (6,2) -- (7,0) -- (10,2) -- (8,3) -- (8,2) -- (9,2) -- (8,1) -- (7,1) -- (7,2) -- (8,2);

\end{tikzpicture} 
}%
\caption{The figure on the left shows a reduced eye.  The figure on the right illustrates a move from the boundary of the eye to the boundary of the iris.  The shaded red area marks a region that is not polygonal (and thus not claimable) as it uses one of its edges twice on the boundary.}
\label{InvalidArea}
\end{figure}
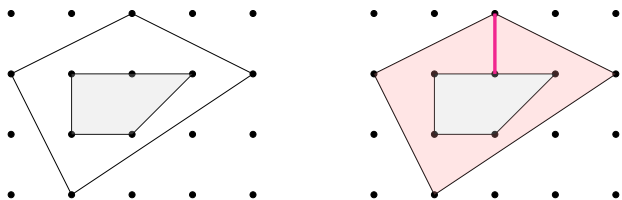

While claiming area in an expanded eye is straightforward, it is useful to note that the process of expanding an eye, and thus also an iris, is a ``free turn'' process; that is, if a player notes such a structure exists, they can quickly check and claim area in it until it is expanded. A player can expand the boundary of an eye by connecting pairwise combinations of boundary vertices with valid moves.  However, not every pairwise combination of iris vertices will form a valid polygon, and hence establishing that expanding an iris is a free turn process needs special consideration. This free turn process of expanding an eye will also imply that both hanging and split hanging eyes can be immediately claimed in a single turn. 

\begin{lemma}
The process of expanding an iris can always be done is such a way as to not consume a full turn.
\label{ExpandThm}
\end{lemma}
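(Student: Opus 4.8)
The plan is to view the iris as a connected planar straight-line graph $G$ whose vertices are interior lattice points and whose edges are the segments already drawn, and to read ``expanding the iris'' as adding valid segments between iris vertices---segments that cross no existing line and pass through no other lattice point---until none can be added, i.e.\ until $G$ is maximal. By Pick's theorem, any triangle of a lattice triangulation with no lattice point in its interior has area $\frac{1}{2}$ and, being a face, contains no interior segment; hence completing such a triangle closes a valid polygon and earns a free move. Reading ``does not consume a full turn'' as ``is a free-turn process,'' it therefore suffices to exhibit an order in which the segments needed to reach the expanded state are added so that \emph{every} added segment completes at least one such empty triangle. A player following that order never surrenders the move, which is exactly the claim.

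First I would reduce the lemma to a single local statement: whenever $G$ is not yet fully expanded, there is a valid move that completes an empty triangle. Iterating this statement produces the desired free-turn ordering, and because the board is finite the process terminates. To produce such a move I would choose, among all pairs of iris vertices $u,v$ that can be validly connected, one that minimizes the area enclosed on one side of the new segment. This single minimality criterion handles both the case where $G$ already has bounded faces and the case where $G$ is still a tree whose only face is the outer one, so the argument need not split into cases.

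The main obstacle is precisely the phenomenon the paper flags before the statement: not every segment between two iris vertices closes a claimable polygon. If the segment encloses a hanging branch of $G$, the region it cuts off already contains an interior segment and is not immediately claimable; and if the closed cycle runs along a bridge of $G$, the bounding walk of the cut-off region traverses that bridge twice, so the region is not even a valid polygon---the same ``edge used twice'' failure illustrated in Figure~\ref{InvalidArea}. The crux is thus to guarantee that the chosen move cuts off a genuine empty triangle, and this is where the minimality does the work: any lattice point interior to the cut-off region, or any intruding branch vertex, would be visible from $u$ or $v$ and would yield a strictly smaller valid move, contradicting the choice, so the minimal region is a single primitive triangle; and since a minimal ear is bounded by a simple $3$-cycle, the bridge-doubling failure cannot arise. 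I expect this minimality argument---ruling out both enclosed branches and the bridge case---to be the genuinely delicate step, as it is exactly where the ``not every pair forms a valid polygon'' issue must be controlled.

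Finally I would package the iteration cleanly using Theorem~\ref{nointerior}: once a bounded face with no interior lattice points is present, that entire face is a closed shape with no interior points and so, by that theorem, can be claimed in a single free-turn process. This lets me expand non-triangular faces wholesale rather than one triangle at a time and confirms that the whole expansion is completed without the player ever ending a turn---indeed with a free move still pending---which establishes the lemma.
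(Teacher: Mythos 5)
Your proposal attacks the same key claim as the paper---whenever the iris is not yet expanded, some valid move between iris vertices claims area---but by a genuinely different route. The paper first claims greedily until stuck, then supposes a valid non-claiming move exists and derives a contradiction in two cases: a closed region that revisits a vertex (handled by locating a claimable region that should already have been claimed), and a closed region with interior lines (handled by a recursive descent through smaller and smaller subregions $e_1, e_2, \ldots, e_k$, terminating by finiteness). You instead make an extremal choice: take the valid move whose enclosed region has minimal area and argue that minimality forces that region to be an empty triangle, then iterate. This ``minimal ear'' argument is attractive: it avoids the paper's recursion, produces the free-turn ordering explicitly, and exploits the same structural fact the paper uses implicitly, namely that every face strictly inside the eye contains no stray lattice points (each interior point of the eye lies on the iris or inside a claimed region), so a diagonal of a face is automatically a valid move.

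There are, however, two concrete soft spots at exactly the step you flag as delicate. First, the visibility claim is false as stated: a branch vertex or lattice point inside the minimal region need not be visible from $u$ or from $v$---it can be shielded by the branch's own edges or by reflex corners of the bounding path---so minimality cannot be contradicted via $u$ and $v$ alone. What your argument actually needs (and what is true) is weaker: any bounded face whose boundary walk uses more than three edge-traversals---in particular any face with a slit, i.e.\ a bridge traversed twice---admits a diagonal between \emph{some} two of its boundary vertices; since the face contains no lattice points in its interior, that diagonal is a valid move and splits the face into two regions of strictly smaller area, contradicting minimality. So the conclusion survives, but it must be routed through a diagonal-existence (triangulation) argument rather than visibility from the chosen endpoints; note also that your parenthetical ``a minimal ear is bounded by a simple $3$-cycle, so bridge-doubling cannot arise'' assumes the very conclusion at issue, since a slit is one of the ways the minimal region could fail to be a simple $3$-cycle. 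Second, you should pin down which side of the new segment is measured. One of the two faces adjacent to a candidate move can be the region lying between the iris and the eye's boundary; that face has the eye's boundary as a second boundary component, is never claimable (the same ``used twice'' failure as in Theorem~\ref{RedEyeArea}), and its diagonals may all run to eye-boundary vertices, so no smaller iris--iris move need exist inside it. The repair is to minimize the area of the side \emph{away} from the eye's boundary---such a side always exists because the move plus a path in the connected iris bounds a region not containing the eye's boundary---after which the minimal region has connected boundary made only of iris edges and the new segment, and your contradiction goes through.
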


\begin{proof}
Start by connecting any vertices that result in claimed area. Such moves will always result in another free move. Continue this until no such move is possible. If the iris is expanded, we are done. If it is not expanded it means there are still two vertices that can be connected to close a region, but that region is not claimable. If the region is not claimable, then by the rules of the game, either the closed region is not a polygon or there are interior lines. We will show by contradiction that neither of these two cases can occur.

\textit{Case 1:} First consider the case where the closed region is not a polygon. That is, suppose we connect two vertices to form a unclaimable closed region without interior lines. Note that such a region must use a single vertex $i$ twice on its boundary. (A closed region that uses line segments twice on its boundary can be classified as case 2). We will show there is a region that should have already been claimed. In the scenario where we use $i$ twice, consider the vertices near $i$ and on the opposite ``side'' of the the line segment that was added to close the region (see Figure \ref{SharedVertex}).  Since we are on a lattice structure and there are no interior vertices or lines in the region we are guaranteed that there exist two such vertices that can be connected validly. Connecting these two then forms a polygon with no interior points, a claimable region that leads to a contradiction as any such area should have already been claimed.

\begin{figure}[htb]
\centering
\resizebox{0.5\textwidth}{!}{%
\begin{tikzpicture}
    \foreach \Point in {(0,3),(0,5),(2,2)[label=$i$],(3.5,3),(0,0)}{
        \node at \Point {\textbullet};
    }

    \draw[thick] (0, 3) -- (0,5) -- (2,2) -- (0,0);
    \draw[magenta,thick] (2,2) -- (3.5,3);
    \draw[dash dot] (3.5,3) -- (5,4);
    \draw[dash dot] (2,2) -- (4,1);
    \draw[dash dot] plot [smooth,tension=0.5] coordinates{(0,0) (-0.4,-0.4) (-0.6,-0.6) (-0.5,-1) (2,-1.4) (3.2,-0.2) (3.6,0.5) (4,1)};
    \draw[dash dot] plot [smooth,tension=0.5] coordinates{(0,3) (-0.5,1.5) (-0.8,0.4) (-1.2,0) (-1,-1) (0.2,-1.7) (1.5,-2) (2.4,-2.4) (3, -1.3) (3.6,-1) (3.8,-0.3) (4,0.2) (4.4,1) (4.6, 2) (5.7,3) (5,4)};
    \fill[gray,nearly transparent]  plot [smooth,tension=0.5] coordinates{(0,0) (-0.4,-0.4) (-0.6,-0.6) (-0.5,-1) (2,-1.4) (3.2,-0.2) (3.6,0.5) (4,1)};;
    \fill[gray,nearly transparent] (0,0) -- (2,2) -- (4,1) -- cycle;
    \draw[pattern color=yellow, pattern=north east lines] (0,0) -- (0,5) -- (2,2) -- cycle;
    \draw[yellow,thick] (0,0) -- (0,3);
\end{tikzpicture} 
}
\caption{A region that is not a polygon, closed by adding the pink line (not necessarily adjacent to $i$) on the right side. We are considering the vertices on the left. The shaded yellow region indicates area that should have already been claimed by placing the yellow line segment.}
\label{SharedVertex}
\end{figure}
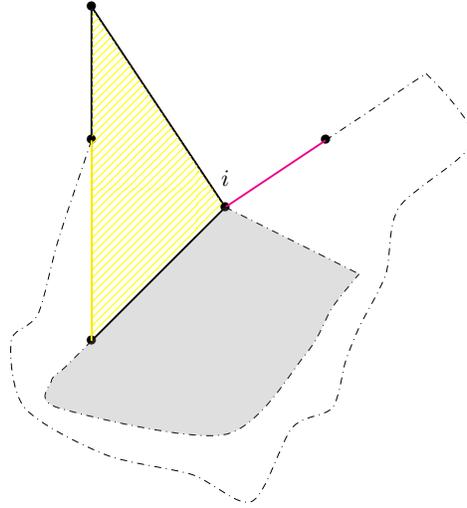

\textit{Case 2:} Now consider the case where the closed region has interior lines.  Let $e_1$ be the line segment that was added to close this region. By the definition of an iris, all lines must be path connected (potentially using the boundary of this new closed region), and it contains no degree 0 vertices. This means this new closed portion of the iris is itself either a hanging eye or split hanging eye that spans the interior vertices.

If we have a hanging eye, draw another line $e_2$ between the iris and the border of the closed region. If we have a split hanging eye draw $e_2$ between two hanging irises. This splits the region into two parts, one with both $e_1$ and $e_2$ on its boundary, and one with just $e_2$. This new region formed with $e_2$ is not claimable, as otherwise it would have been claimed by supposition. Thus, the new region is a (smaller) region to which we can again apply either case 1 or case 2. Since case 1 has been disproved, we can repeat the process from case 2. Since we are working on a finite lattice structure, we know that there are only a finite number of possible moves available to us in this region. Eventually, drawing a line segment $e_k$ would form a triangle or other claimable polygon that did not contain any other $e_j$, for $1\leq j<k$.  However, this contradicts the fact we started by claiming all possible area. Hence, this case also does not hold.
\end{proof}

\begin{theorem}
A hanging eye can have its entire area claimed in a single turn.
\label{HangEyeOneTurn}
\end{theorem}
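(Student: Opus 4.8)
The plan is to exploit the fact that the single hanging connection already plays the role of the area-less opening move that the first player is forced to make in a reduced eye; once that connection is present, the player to move is in exactly the position of the ``second player'' in Theorem~\ref{RedEyeArea} and can sweep up all the area. First I would invoke Lemma~\ref{ExpandThm} to expand the iris, which does not consume the turn, so I may assume the iris is expanded and joined to the outer boundary at a single vertex $v$. During that expansion every region that can be closed using only iris vertices is already claimed (each completion granting a free turn), and since the iris contains every interior lattice point of the eye, any face bounded entirely by iris edges has no interior points and hence is captured while expanding. Thus I may reduce to the case where the only unclaimed area is the region lying between the iris and the outer boundary.

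This remaining region is an annulus slit open at $v$: its inner (iris) boundary and its outer boundary meet only along the hanging segment(s) at $v$. The obstacle here is precisely the reason a plain eye cannot be claimed by the mover: this region is not itself a valid polygon, because any traversal of its boundary runs along the hanging connection twice, the same ``edge used twice'' defect that defeats the first player in the reduced-eye argument (cf.\ the Dots-and-Polygons version of Theorem~\ref{RedEyeArea}). The key step is therefore to make a single additional move joining a vertex of the expanded iris to a boundary vertex $w \neq v$. Together with the existing hanging connection at $v$, this second bridge across the annulus severs it into two simple polygons $A$ and $B$. Because all interior lattice points lie on the iris, neither $A$ nor $B$ contains an interior point, and because their only edges come from the iris and the outer boundary, neither contains an interior line segment. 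Every edge of $A$ other than the new $w$-segment is already drawn, so this move completes and hence claims $A$, granting a free turn (and it may simultaneously claim $B$).

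Finally I would apply Theorem~\ref{nointerior} to whichever of $A$, $B$ is not captured outright by the bridging move: each is a closed shape with no interior points and so is fully claimable in a single turn. Chaining the free-turn expansion of the iris, the bridging move, and the Theorem~\ref{nointerior} sweep of $A$ and $B$ yields the entire hanging eye without ever ceding the turn. I expect the main obstacle to be the geometric bookkeeping of the key step: verifying that a suitable $w$ always exists so that $A$ and $B$ are genuinely simple polygons free of both interior points and interior line segments, and confirming that the bridging move actually completes a polygon rather than merely subdividing the region, so that the free turn is not lost. The more intricate the iris (for instance when it winds closely against the boundary), the more care this verification requires, but the underlying dichotomy is exactly that of Lemma~\ref{ExpandThm}: any closed region produced along the way either is a claimable polygon or forces an edge to be reused, and the hanging connection guarantees we are always in the former situation.
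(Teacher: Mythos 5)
Your proposal takes essentially the same route as the paper's proof: expand the iris via Lemma~\ref{ExpandThm} as a free-turn process, then make a single bridging move from the expanded iris to the outer boundary, which splits the remaining region into two valid polygons (no interior points, no interior lines) and so claims all the area. Your added framing via Theorem~\ref{RedEyeArea} and the fallback appeal to Theorem~\ref{nointerior} are harmless embellishments; the core argument, including the lightly justified existence of a suitable bridging move, matches the paper.
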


\begin{proof}
We begin by applying Lemma \ref{ExpandThm} to the iris of the hanging eye.  The process of expanding the iris does not consume a full turn nor claim all the area of the eye.  Expanding the iris leaves us with an expanded hanging eye. Due to the lattice structure and no degree 0 interior points, we can find a move connecting the expanded iris to the boundary. This divides the hanging eye into two valid polygons and thus claims the area.

\end{proof}

Similarly, we can argue a split hanging eye can also have its entire area claimed in a single turn. Theorem \ref{HangEyeOneTurn} implies that a hanging eye cannot be reduced by definition.  Additionally, even though a player can claim the entire area of a hanging or split hanging eye, they may want to only claim part of the area in order to create a doublecrossed move. This theorem cannot, however, be extended to a lazy hanging or lazy split hanging eye where the iris(es) do not span all the interior nodes. While a player may still claim the region in a single turn, this will not always hold, as pictured in Figure \ref{HangerCE}.

\begin{figure} [htb]
\centering
\resizebox{0.35\textwidth}{!}{%
\begin{tikzpicture}
    \foreach \x in {0,1,...,4}{
      \foreach \y in {0,1,...,4}{
        \node[draw,circle,inner sep=1pt,fill] at (\x,\y) {};
      }
    }
    \draw (2,0) -- (4,3) -- (2,4) -- (0,3) -- cycle;
    \draw (2,4) -- (2,3);

\end{tikzpicture} 
}%
\caption{Here the figure is a lazy hanging eye where the iris does not span all vertices.  A player is unable to claim this region in a single turn.}
\label{HangerCE}
\end{figure}
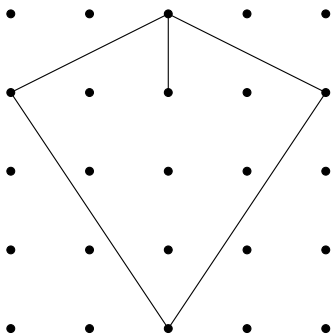

\subsection{Double-Dealings and Doublecrosses}

Now, in Dots-and-Triangles Theorem \ref{TrianglesDCThm} established that the number of turns equals the number of dots on the game board plus the number of doublecrosses. However, this is only an upper bound on the number of moves in Dots-and-Polygons. That is, claiming any closed polygon in Dots-and-Polygons takes at most the same number of moves as triangulating that same polygon in Dots-and-Triangles. However, in the next theorem we establish an equality. 

\begin{theorem}
The number of turns equals the number of dots on the game board plus the number of double crosses minus the number of unused interior dots
\end{theorem}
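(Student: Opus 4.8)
The plan is to follow the proof of Theorem~\ref{TrianglesDCThm} line for line, keeping the same bookkeeping but repairing Euler's formula to account for dots that never receive a segment. Let $D$ be the number of dots, $L$ the number of line segments drawn over the whole game, $P$ the number of claimed polygons at the end, $T$ the number of turns, $C$ the number of doublecrosses, and $U$ the number of unused interior dots. I would derive two expressions for $L$ and then eliminate $L$ and $P$ to land on $T = D + C - U$.

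First I would re-establish the move-counting identity, which carries over from the triangular case almost verbatim. Each segment either completes at least one polygon, earning a free move, or completes none and ends the turn, and these alternatives are mutually exclusive. Since a doublecross is one completing move that closes two polygons, the number of completing moves is $P - C$, while the number of turn-ending moves is $T - 1$: every turn except the last finishes with a non-completing move, whereas the final turn instead ends the game by completing the last region. Summing the two kinds of moves gives
\[
  L = (P - C) + (T - 1) = P - C + T - 1,
\]
the exact analogue of Equation~$2.3$.

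The new content lives in the second equation, and this is the step I expect to be the main obstacle. Consider the plane graph $G$ whose vertices are the $D$ dots and whose edges are the $L$ segments. Because no legal move may pass through a lattice point, the perimeter of the fully claimed rectangle is cut into unit edges and every boundary dot carries a segment; hence the only unused dots are interior, confirming that $U$ counts exactly the isolated vertices of $G$. The used dots together with all edges form a single connected component: the claimed polygons tile the rectangle and are simple (holeless by definition), so their shared boundaries knit into one connected $1$-skeleton, and no nontrivial subconfiguration can float inside a bounded face, since that face would then be a polygon with a hole. Thus $G$ has $1 + U$ components. Moreover each bounded face is exactly one claimed polygon---a claimed polygon carries no interior segment and so is a single face---giving $F = P + 1$ faces once the outer face is counted. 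Euler's formula for a plane graph with $c$ components, $V - E + F = 1 + c$, therefore reads
\[
  D - L + (P + 1) = 1 + (1 + U) = 2 + U,
\]
so that $L = P + D - 1 - U$.

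Equating the two expressions for $L$ gives $P - C + T - 1 = P + D - 1 - U$; cancelling $P$ and the constants leaves $T = D + C - U$, as desired. The delicate part, as flagged above, is the component count feeding into the modified Euler's formula: I would need to argue carefully both that each unused interior dot is genuinely a separate component (this is what produces the $+U$, and hence the $-U$ in the final identity) and that the used portion forms a single component. The latter leans essentially on the holelessness of claimed polygons; verifying that every unused dot is interior and that each bounded face is a single claimed region are the remaining details that make the face and component tallies exact.
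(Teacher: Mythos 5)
Your proof is correct and takes essentially the same approach as the paper: the move-counting identity $L = P - C + T - 1$ combined with Euler's formula corrected for the unused interior dots, then eliminating $L$ and $P$. The only difference is bookkeeping---the paper deletes the unused dots and applies the connected-graph formula $(D-I) - L + (P+1) = 2$, while you keep them as isolated vertices and use the multi-component version $V - E + F = 1 + c$; these are equivalent, and your explicit justification of the connectivity and face counts is, if anything, more careful than the paper's.
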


\begin{proof}

First, suppose we are playing a game of Dots-and-Polygons without any doublecrosses and every claimed polygon has no unused interior points. Let $D$ be the number of dots, $T$ the number of turns to draw $L$ line segments, and $P$ be the number of polygons we finish the game with. Now, every line segment that is placed, except the last, either forms exactly one polygon or ends the turn (but not both). Then the similar result from the Dots-and-Triangles proof holds, and we get that the number of turns equals the number of dots.  If we allow doublecrosses still without unused interior points, the result still follows that we have the number of turns equals the number of dots plus the number of doublecrosses.

Now, consider what happens if there are unused interior points.  Let $I$ stand for the number of these unused interior points, and $C$ be the number of doublecrosses.  So, it still follows that
$$L=P-C+T-1.$$
Using Euler's formula where $D-I$ is the number of vertices, $L$ is the number of edges, and $P+1$ is the number of faces, we get that
$$(D-I)-L+(P+1)=2.$$
Employing some algebra gives us the result
\[
T=D+C-I.\qedhere
\]
\end{proof}

We can assume in Dots-and-Polygons that any doublecrossed move must occur within a closed region. Imagine Robert performs a double-dealing move, but leaves an open region after his turn.  Betty will perform a doublecrossed move on her turn, and as a result, will claim the entire region during her turn. If Betty closes the region, then Robert's move was not double-dealing as he could not have claimed the entire area to begin with, although he may have been able to claim part of it.  Alternatively, if Betty does not close the region, then she did not claim any area, and hence did not make a doublecrossed move. Therefore, we can assume that all further doublecrossed moves occur inside a closed region. Similarly, we can use this reasoning to establish the following theorem.

\begin{theorem}
A player is unable to perform a double-dealing move in a claimable region with no interior points.
\label{NoDC}
\end{theorem}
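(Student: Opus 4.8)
The plan is to argue by contradiction, mirroring the case analysis of the paragraph immediately preceding the statement and the use of a ``hanging'' segment in Figure~\ref{DC}. I will use the working notion of a double-dealing move employed there: a double-dealing move is a move that claims no area (and hence ends the moving player's turn) while leaving the opponent a forced doublecross. Taking a \emph{claimable region with no interior points} to mean a closed region containing no interior lattice points (the hypothesis of Theorem~\ref{nointerior}), I would show that no turn-ending, non-claiming move exists inside such a region, so a double-deal can never be initiated there.

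First I would record the structural consequence of having no interior points: every lattice point of the region lies on its boundary, so every legal move is a chord joining two boundary vertices through the interior. The heart of the argument is then to show that any such chord must close a polygon, and therefore must claim area. Viewing the already-drawn segments as a planar subdivision of the region, the new chord lies in a single face and splits it into two sub-regions. Each sub-region is bounded entirely by segments that are already present (an arc of the face boundary together with the new chord), its only lattice points are boundary points (a valid move crosses no lattice point), and since the region had no interior lattice points neither sub-region has any. Hence each is a closed polygon with no interior lines, so the move claims area and grants a free move; it cannot be the turn-ending move a double-deal requires. As this holds for \emph{every} available move inside the region, the moving player can never voluntarily pass the region to the opponent, and together with Theorem~\ref{nointerior} is in fact driven to keep taking free moves until all of its area is claimed. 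This contradicts the existence of a double-dealing move and finishes the proof.

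The main obstacle is ruling out the two ways a move could fail to claim area: closing no polygon at all, or closing an invalid, self-touching region of the kind analyzed in Case~1 of Lemma~\ref{ExpandThm}. Both are precisely the phenomena that make double-dealing possible elsewhere---the hanging segment of Figure~\ref{DC} and the irises of hanging eyes---and both rely on an interior vertex: a non-claiming move needs either a degree-one (hanging) vertex to attach to or an interior vertex around which to route a pinch. I would make this precise by observing that, with every vertex on the boundary, no chord can create a degree-one vertex or a bridge, so each face of the subdivision is and remains a simple polygon and every chord genuinely encloses valid claimable area. This also isolates the conceptual content of the theorem: it explains why the nested-diamond double-deal of Figure~\ref{NDPlay2} persists in Dots-and-Triangles but collapses in Dots-and-Polygons, since that move depended on the diamond's interior point, which is exactly what is forbidden here.
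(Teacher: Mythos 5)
Your core geometric insight---that with no interior lattice points every segment is a chord between boundary vertices, chords cannot create pinches or hanging (degree-one) vertices, and hence every such move closes valid polygons---is exactly the right ingredient, and it is the same mechanism the paper's proof relies on. But your reading of the hypothesis introduces a genuine gap. You take ``claimable region with no interior points'' to mean a \emph{closed} region. In Dots-and-Polygons that setting is degenerate: every face of a closed region with no interior points was, at the moment its boundary was completed, a simple polygon with no interior lines, so by rule (2) it was claimed at that instant. Consequently a closed region with no interior points is already fully claimed, and your key step---that a chord drawn inside it ``claims area and grants a free move''---is false: both sub-regions cut off by the chord are already claimed, so the move claims nothing and simply ends the turn. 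Your argument is thus at best vacuous (the hypothesized unclaimed closed region cannot exist) and at worst asserts a claim that fails in the only configurations your reading allows.

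The paper reads ``claimable'' as ``can be claimed by the player to move on this turn,'' deduces that such a region must be missing exactly one boundary segment (otherwise it could not be claimed in a single turn), and then argues that \emph{every} move available there---crucially including the move that completes the boundary---closes a valid polygon and claims area, granting an extra move, which contradicts the turn-ending, non-claiming nature of a double-dealing move. This boundary-completing move is the one your face-splitting argument never treats, and it is the heart of the matter: the double-deal of Figure~\ref{DC} is executed precisely by closing the region without claiming it, which is possible there only because the interior point supports a hanging segment that invalidates the closed polygon. The content of Theorem~\ref{NoDC} is that no interior points means no hanging segments, so closing the region cannot avoid claiming it; a correct proof must say this about the closing move of an open region, not about chords inside a region that is already closed.
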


\begin{proof}
Suppose that Robert can perform a double-dealing move against Betty in a claimable region with no interior points.  Then the polygon is missing exactly one line segment on its boundary; otherwise it is not claimable in a single turn. Robert must still perform a double-dealing move by supposition.  However in this region such a move would complete a valid polygon and claim area, resulting in an additional turn, which is a contradiction to the double-dealing strategy.
\end{proof}

\begin{theorem}
A player can only be forced to perform a doublecrossed move in a closed region with a hanging eye or split hanging eyes.
\label{DCHangEye}
\end{theorem}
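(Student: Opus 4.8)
The plan is to argue by characterizing the local structure that a double-dealing move must leave behind. First I would recall, from the paragraph preceding Theorem~\ref{NoDC}, that every forced doublecrossed move takes place inside a closed region and arises as the response to the opponent's double-dealing move; so I may assume \R\ has just played a move in a closed region $R$ that claims no area, and that \B\ is now compelled to claim area and thereby open the next region. I would first dispose of the possibility that $R$ has no interior points: by Theorem~\ref{nointerior} such a region is claimable in a single turn, and by Theorem~\ref{NoDC} it admits no double-dealing move at all, so \R\ could not have set up the doublecross. Hence $R$ must contain interior points, and I may take its irises to be expanded using Lemma~\ref{ExpandThm}.

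The heart of the argument is a case analysis on how the iris (or irises) of $R$ meets the boundary, since \B\ is forced to \emph{claim}, and closing two or more polygons in a single move requires the interior line structure to be incident to the boundary. If the iris is disconnected from the boundary, then $R$ is a reduced eye; but in a reduced eye the first move claims no area (it only opens the eye), so \B\ would be opening rather than doublecrossing, contradicting the hypothesis unless \R's declining move itself attached the iris to the boundary. That attachment produces exactly a hanging eye, which is the phenomenon already visible in Figure~\ref{DC}. If instead the iris is joined to the boundary at exactly one vertex, $R$ is by definition a hanging eye, and if $R$ carries several mutually disconnected irises each pinned to a single boundary vertex it is a split hanging eye; in both of these cases Theorem~\ref{HangEyeOneTurn} guarantees that \B\ can close out all the area in one turn, and the pinch at the shared vertex is precisely what forces that final move to complete two polygons at once and then surrender the turn.

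The step I expect to be the main obstacle is excluding the remaining configuration, in which a single connected iris meets the boundary at two or more vertices. Here $R$ is not a hanging eye, and I would show that \B\ is \emph{not} forced to doublecross: the iris now separates $R$ into two or more smaller closed sub-regions, so \B\ has a control-preserving move that merely splits $R$ along the iris---the analogue of the splitting move illustrated in Figure~\ref{ShaveOff}---rather than being compelled to claim and open. Making this precise requires verifying that at least one such split is a legal single move that cedes no area, which is where the expanded-iris hypothesis from Lemma~\ref{ExpandThm} does the work; iterating the split decomposes $R$ until every remaining piece is a hanging or split hanging eye. Once this case is handled, the only structures in which the doublecross is genuinely unavoidable are the hanging eye and the split hanging eye, giving the theorem.
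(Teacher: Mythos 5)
Your overall aim---to exhibit, in any closed region that is not a hanging or split hanging eye, a move for Betty that claims no area, so that she is not forced to doublecross---is the same as the paper's, but two of your steps have genuine gaps. First, your case analysis is incomplete: you sort regions only by how an iris meets the boundary, so you never treat regions containing interior vertices that belong to no iris at all (degree-zero interior vertices, the ``lazy'' configurations). Your normalization via Lemma \ref{ExpandThm} cannot remove this case: expanding an iris proceeds by area-claiming moves, and no move incident to an isolated interior vertex can ever claim area, because a polygon cannot have a vertex of degree one; such vertices are therefore never absorbed into an iris. This missing configuration is exactly the paper's Case 1, and the escape move is the observation just made: Betty connects a degree-zero vertex to any other vertex, claims nothing, and her turn ends. (The same point shows that invoking Lemma \ref{ExpandThm} as a without-loss-of-generality step is itself shaky: expansion consists of claiming moves, so it alters the very position in which Betty is supposedly forced; the paper argues directly on the region as given.)

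Second, the case you flag as the main obstacle---a connected iris meeting the boundary in two or more vertices---is not a real case, and your proposed resolution would not work anyway. Connected interior structure touching the boundary at two vertices contains a path between two boundary vertices and hence already subdivides the region into smaller closed regions; those segments are boundary, not interior, so under the paper's definitions this configuration is not an iris of a single closed region at all. There is nothing for Betty to ``split along,'' and your plan to iterate such splits is incoherent as game play, since a move that claims no area ends her turn---non-claiming moves cannot be iterated within one turn. A smaller issue: in your hanging-eye case you assert that the pinch vertex forces the doublecross; that is a sufficiency claim the theorem does not make and which the paper explicitly refutes (Figure \ref{UnableToDC}). It is harmless, since when the region is a hanging or split hanging eye the conclusion holds trivially, but it should be cut. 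For comparison, the paper's entire proof is two short cases: connect a degree-zero vertex to anything (no polygon has a degree-one vertex), or connect a vertex of a detached iris to anything (the resulting closed region traverses the new segment twice, hence is not a polygon); either move claims nothing, contradicting the assumption that Betty was forced.
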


\begin{proof}
We will assume that Robert performed the double-dealing move, and Betty is the player forced into making the doublecrossed move. Proceeding via contradiction, assume that Betty performs the doublecrossed move in a region that does not meet the claimed criteria.  This yields two cases.

\textit{Case 1:} Suppose not all the interior vertices are spanned. Then Betty can simply connect a degree 0 vertex to any other vertex without claiming area as it is impossible to have a polygon with a degree 1 vertex.
 
\textit{Case 2:} Suppose all the interior vertices are spanned but there is at least one iris that is not hanging. Note that the line segments that form the non-hanging portion of the irises are disjoint from all other line segments that form the region. This means if Betty connects a vertex from the non-hanging iris to any other vertex, she would not claim area as the region formed would use the new line segment as a boundary twice (and thus would not be a polygon).

Both these cases contradict Betty being forced to perform the doublecrossed move.
\end{proof}

The previous theorem gives necessary conditions for a doublecrossed move. Note that this theorem describes not the region in which we doublecross, but the region we get as a result of the doublecross. However, this theorem does not stipulate that every closed region with hanging eyes can be used to force a player into a doublecrossed move, as is demonstrated in Figure \ref{UnableToDC}.

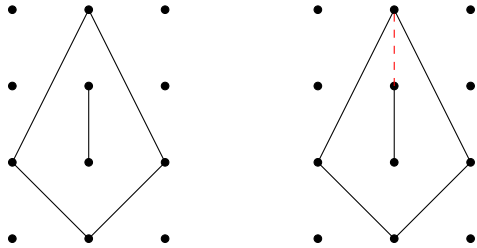
\begin{figure} [htb]
\centering
\resizebox{0.5\textwidth}{!}{%
\begin{tikzpicture}
    \foreach \x in {0,1,2}{
      \foreach \y in {0,1,...,3}{
        \node[draw,circle,inner sep=1pt,fill] at (\x,\y) {};
      }
    }
    \draw (1,0) -- (0,1) -- (1,3) -- (2,1) -- cycle;
    \draw (1,1) -- (1,2);
    
    \foreach \x in {4,5,6}{
      \foreach \y in {0,1,...,3}{
        \node[draw,circle,inner sep=1pt,fill] at (\x,\y) {};
      }
    }
    \draw (5,0) -- (4,1) -- (5,3) -- (6,1) -- cycle;
    \draw (5,1) -- (5,2);
    \draw[red, thin, dashed] (5,2) -- (5,3);

\end{tikzpicture} 
}%
\caption{The figure on the left shows the initial region $\R$ can move in. Although the move shown in the figure on the right creates a hanging eye, it does not qualify as a double-dealing move as $\R$ could not have claimed area initially.}
\label{UnableToDC}
\end{figure}

We have examined regions in which players can make double-dealing moves, but what is the smallest area a player would have to give up when making such a move?  Since the goal of a double-dealing move is to give up area to gain more area in the long run, players might try to strategize their double-dealing moves to give up as little area as possible.

\begin{theorem}
The minimum amount of area that is needed to perform a double-dealing move is $\frac{3}{2}$.
\end{theorem}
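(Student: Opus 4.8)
The plan is to combine the structural results already established—chiefly Theorem~\ref{NoDC} and Theorem~\ref{DCHangEye}—with Pick's theorem to obtain a sharp lower bound on the area surrendered, and then to exhibit the configuration of Figure~\ref{DC} as a minimizer. The guiding observation is that the ``area needed'' for a double-dealing move is precisely the area of the region the opponent sweeps up in the resulting doublecrossed move, so the problem reduces to finding the smallest-area region in which such a move can legitimately occur.

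First I would argue that any region in which a double-dealing move is performed must contain at least one interior lattice point. This follows from Theorem~\ref{NoDC}: a claimable region with no interior points admits no double-dealing move, since any move there either claims area (and hence grants an extra turn, contradicting the double-dealing strategy) or fails to close the region. More strongly, Theorem~\ref{DCHangEye} guarantees that the region resulting from the move is a hanging eye or a union of split hanging eyes, and every such structure carries an iris, hence at least one interior point. Writing the surrendered region as a polygon with $I$ interior points and $B$ boundary points, we therefore have $I \geq 1$, while any polygon satisfies $B \geq 3$.

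Next I would invoke Pick's theorem. The total area given up equals $I + \tfrac{B}{2} - 1$, and the interior iris segments do not alter this count. Substituting $I \geq 1$ and $B \geq 3$ yields
\[
I + \frac{B}{2} - 1 \;\geq\; 1 + \frac{3}{2} - 1 \;=\; \frac{3}{2}.
\]
Dropping below $\tfrac{3}{2}$ would force either $I = 0$ or $B < 3$, both impossible, so $\tfrac{3}{2}$ is a genuine lower bound.

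Finally, to show the bound is attained I would return to the triangle of Figure~\ref{DC} with corners $(0,0),(3,2),(0,1)$, the single interior point $(1,1)$ joined to the vertex $(0,1)$ by a hanging segment, and $B = 3$; by Pick this region has area exactly $\tfrac{3}{2}$, and I would verify that closing the last boundary edge is a legitimate double-dealing move—it claims nothing, since the interior segment prevents the closed triangle from being a valid polygon, yet leaves a hanging eye the opponent is compelled to claim. I expect the main obstacle to lie not in the inequality but in this verification: confirming that the Pick-minimal polygon actually supports a \emph{valid} double-dealing move (the first player claims no area and the opponent is genuinely forced to doublecross) rather than merely being a polygon of the right area, and ruling out degenerate \emph{lazy} configurations in which the iris fails to span the interior point and the doublecross is therefore not forced.
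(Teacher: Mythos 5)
Your proposal is correct and follows essentially the same route as the paper: both hinge on Pick's theorem together with Theorem~\ref{NoDC} (no double-dealing without interior points) for the lower bound, and both exhibit the triangle of Figure~\ref{DC} with $I=1$, $B=3$ as the minimizer. The only difference is cosmetic---you phrase the lower bound as the direct inequality $I+\frac{B}{2}-1\geq\frac{3}{2}$ from $I\geq 1$, $B\geq 3$, whereas the paper enumerates the sub-$\frac{3}{2}$ areas $\frac{1}{2}$ and $1$ and rules them out, which is the contrapositive of the same argument.
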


\begin{proof}
Since we are working on an integer lattice, then the only regions with area less than $\frac{3}{2}$ we need to consider are $\frac{1}{2}$ and $1$. By Pick's theorem both areas would not have any interior points. Suppose a player makes a double-dealing move in such regions. Since there are no interior points this means the region is either not closed by the double-dealing move or it is a closed region that is immediately claimed. The former case is a contradiction to Theorem \ref{UnableToDC} while the latter case precludes a double dealing move, indicating the original move must not have been double-dealing. Hence we rule out regions with area $\frac{1}{2}$ or $1$.

We will now show that a double-dealing move exists for a shape with an area of $\frac{3}{2}$. We have two cases to consider by Pick's Theorem: $I=0$ and $B=5$, or $I=1$ and $B=3$. In the former case, Theorem \ref{NoDC} tell us that a doublecross cannot occur. This means that we have a triangular region with one interior point.  Consider the region in Figure \ref{DC}.  In this, we can see that a player can introduce the final boundary line to complete the double-dealing move.
\end{proof}

The previous theorem gives players a goal to strive for when performing double-dealing moves. Unfortunately, an explicit strategy to ensure optimal double-dealing moves with minimum amount of ceded area has yet to be established. Moreover, guaranteeing the existence of a double-dealing move is also tricky.  If we consider Figure \ref{NoDCPicture} we have a split hanging eye, but upon closer inspection, it is evident there is no way to do a double-dealing move.

\begin{figure} [htb]
\centering
\resizebox{0.3\textwidth}{!}{%
\begin{tikzpicture}
    \foreach \x in {0,1,...,4}{
      \foreach \y in {0,1,...,3}{
        \node[draw,circle,inner sep=1pt,fill] at (\x,\y) {};
      }
    }
    \draw (0,1) -- (1,0) -- (2,0) -- (3,1) -- (4,3) -- (1,2) -- cycle;
    \draw (2,0) -- (1,1) -- (2,1) -- (2,2);
    \draw (2,0) -- (3,2);

\end{tikzpicture} 
}%
\caption{A split hanging eye that does not contain a double-dealing move.}
\label{NoDCPicture}
\end{figure}
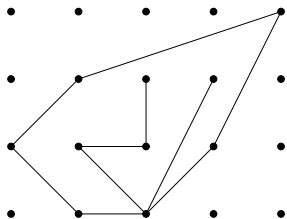

\section{Conclusion}
We have analyzed game play strategies in two proposed variations of the children's game Dots-and-Boxes, Dots-and-Triangles and Dots-and-Polygons, including several winning strategies similar to those in \cite{Berlekamp,Win}. However, there are still many future directions to explore. What happens if we play these games on various surfaces, such as a torus? What strategies guarantee a player is giving up the minimum amount of area after performing a double-dealing move? Additionally, Theorem \ref{ConvexERS} showed there are no convex extremely reduced shapes without interior line segments that have five boundary points. Can this be extended to the cases with larger odd numbers of boundary points?

\bibliographystyle{amsplain}
\bibliography{Dots_and_Polygons}

\end{document}